\documentclass{article}
\usepackage[english]{babel}
\usepackage[utf8x]{inputenc}
\usepackage[T1]{fontenc}
\usepackage[a4paper,top=2cm,bottom=2cm,left=3cm,right=3cm,marginparwidth=1.75cm]{geometry}
\usepackage{graphicx}
\usepackage{float}
\usepackage{amsmath,amssymb,amsthm}
\usepackage{bm}
\usepackage[dvipsnames]{xcolor}
\usepackage{tikz}
\usepackage{comment}
\usepackage[numbers,sort]{natbib}
\usepackage{algorithm,algorithmic,float}
\usepackage{enumitem}
\usepackage{comment}
\usepackage{mathtools}
\usepackage{url}
\usepackage[textsize=tiny,colorinlistoftodos]{todonotes}
\usepackage{marginnote}
\usepackage{etoolbox}

\usepackage{hyperref}
\hypersetup{
    colorlinks,
    breaklinks=true,
    linkcolor={blue},
    citecolor={blue},
    urlcolor={blue}
}

\usepackage{cleveref}
\usepackage{times}
\usepackage{multirow}
\usepackage{makecell}
\usepackage{siunitx}
\usepackage{longtable}
\usepackage[normalem]{ulem}
\usepackage[most]{tcolorbox}
\usepackage{cancel}

\newtcolorbox{mainprob}[1][]{
  fonttitle=\bfseries,
  arc=3mm,
  boxrule=0.8pt,
  title=Main problem\ifx\\#1\\\else\ (#1)\fi
}

\usepackage{booktabs}

\newtheorem{lemma}{Lemma}[section]
\newtheorem{theorem}[lemma]{Theorem}

\newtheorem{proposition}[lemma]{Proposition}
\newtheorem{corollary}[lemma]{Corollary}

\theoremstyle{definition}
\newtheorem{definition}[lemma]{Definition}
\newtheorem{example}[lemma]{Example}
\newtheorem{remark}[lemma]{Remark}
\newtheorem{notation}[lemma]{Notation}

\AddToHook{env/theorem/begin}{\crefalias{lemma}{theorem}}
\AddToHook{env/conjecture/begin}{\crefalias{lemma}{conjecture}}
\AddToHook{env/question/begin}{\crefalias{lemma}{question}}
\AddToHook{env/proposition/begin}{\crefalias{lemma}{proposition}}
\AddToHook{env/corollary/begin}{\crefalias{lemma}{corollary}}

\AddToHook{env/definition/begin}{\crefalias{lemma}{definition}}
\AddToHook{env/example/begin}{\crefalias{lemma}{example}}
\AddToHook{env/remark/begin}{\crefalias{lemma}{remark}}
\AddToHook{env/notation/begin}{\crefalias{lemma}{notation}}

\crefname{lemma}{Lemma}{Lemmas}
\crefname{theorem}{Theorem}{Theorems}
\crefname{conjecture}{Conjecture}{Conjectures}
\crefname{question}{Question}{Questions}
\crefname{proposition}{Proposition}{Propositions}
\crefname{corollary}{Corollary}{Corollaries}
\crefname{definition}{Definition}{Definitions}
\crefname{example}{Example}{Examples}
\crefname{remark}{Remark}{Remarks}
\crefname{notation}{Notation}{Notations}

\newcommand{\CC}{\mathbb{C}}

\newcommand{\QQ}{\mathbb{Q}}

\newcommand{\ZZpos}{\mathbb{N} \cup \{0,-1\}}

\newcommand{\bx}{\mathbf{x}}  
\newcommand{\bz}{\mathbf{z}}

\newcommand{\bc}{\mathbf{c}}  
\newcommand{\by}{\mathbf{y}}  
\newcommand{\bh}{\mathbf{h}}  
\newcommand{\bp}{\mathbf{p}}

\newcommand{\bq}{\mathbf{q}} 
\newcommand{\bF}{\mathbf{f}} 
\newcommand{\bg}{\mathbf{g}} 
\newcommand{\bu}{\mathbf{u}} 
 
\newcommand{\bM}{\mathbf{M}} 
\newcommand{\bU}{\mathbf{U}} 
\newcommand{\bX}{\mathbf{X}} 
\newcommand{\bY}{\mathbf{Y}} 

\newcommand{\bmu}{\boldsymbol{\mu}}

\newcommand{\cF}{\mathcal{F}}
\newcommand{\cK}{\mathcal{K}}
\newcommand{\cE}{\mathcal{E}}
\newcommand{\cL}{\mathcal{L}}

\makeatletter
\newcommand*\rel@kern[1]{\kern#1\dimexpr\macc@kerna}
\newcommand*\widebar[1]{%
  \begingroup
  \def\mathaccent##1##2{%
    \rel@kern{0.8}%
    \overline{\rel@kern{-0.8}\macc@nucleus\rel@kern{0.2}}%
    \rel@kern{-0.2}%
  }%
  \macc@depth\@ne
  \let\math@bgroup\@empty \let\math@egroup\macc@set@skewchar
  \mathsurround\z@ \frozen@everymath{\mathgroup\macc@group\relax}%
  \macc@set@skewchar\relax
  \let\mathaccentV\macc@nested@a
  \macc@nested@a\relax_111{#1}%
  \endgroup
}
\makeatother

\usetikzlibrary{cd} 
\tikzset{
    lablvert/.style={anchor=south, rotate=90, inner sep=.5em}
}

\DeclareMathOperator{\ord}{ord}

\title{Observable functions of rational ODE models\\and how to find them\thanks{AD has been supported by an ERC-2023-ADG grant for the ODELIX project (number 101142171) and partially supported by the NSF grant CCF-2212460. GP has been supported by  the French ANR-22-CE48-0008 OCCAM project.}}

\author{Alexander Demin\thanks{
Laboratoire d'informatique de l'École polytechnique (LIX, UMR 7161), CNRS, École polytechnique, Institut Polytechnique de Paris, Palaiseau, France}, 
Gleb Pogudin\footnotemark[2], 
Christopher Rackauckas\thanks{JuliaHub Inc., Massachusetts Institute of Technology}}
\date{\today}

\begin{document}

\maketitle

\makeatletter
\begingroup
\renewcommand{\thefootnote}{}
\renewcommand{\@makefntext}[1]{\noindent #1}

\footnotetext{%
\vspace*{0.6em}%
\begin{minipage}[t]{0.85\linewidth}
\vspace{0pt}%
\itshape
Funded by the European Union. Views and opinions expressed are however
those of the author(s) only and do not necessarily reflect those of the
European Union or the European Research Council Executive Agency.
Neither the European Union nor the granting authority can be held
responsible for them.
\end{minipage}%
\hfill
\begin{minipage}[t]{0.13\linewidth}
\vspace{0.20em}
\includegraphics[height=0.80cm]{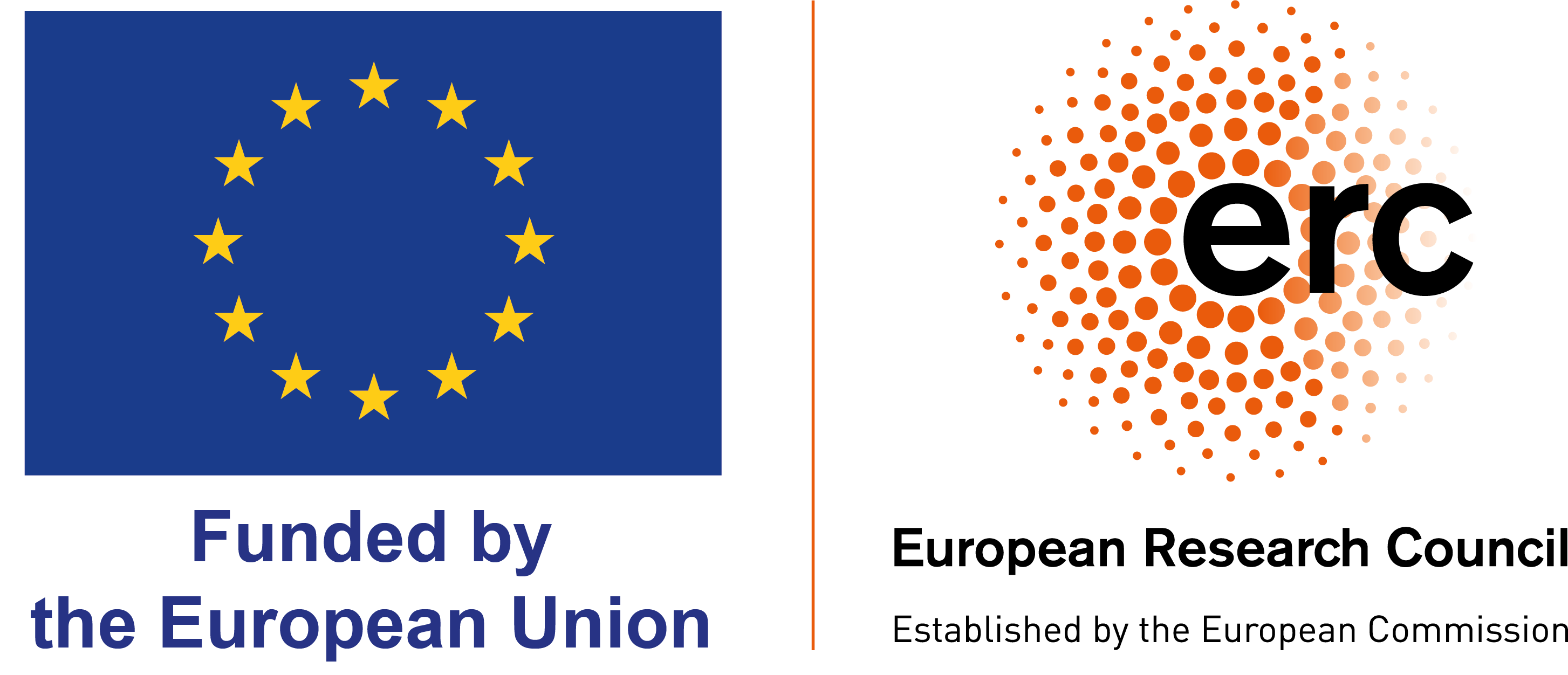}
\end{minipage}%
}
\endgroup
\makeatother

\begin{abstract}



Consider a parametric ODE control model.
A function of the
states and parameters is called \emph{observable} if its value can in principle be reconstructed from input-output data.
The observable functions form a field, called the \emph{observation field}, represented naturally by a set of generators.
Even when the model is not fully observable, this field captures the information still accessible from input-output data.

We present an algorithm for computing a concise generating set for the observation field of a model with rational dynamics. The algorithm relies on two new results: one allows observable functions to be extracted from the coefficients of repeated Lie derivatives of the outputs, while the other reduces the required orders of differentiation by exploiting identifiable parameter combinations.

We implement the resulting algorithm in
\href{https://github.com/SciML/StructuralIdentifiability.jl}
{StructuralIdentifiability.jl}. For computational efficiency, we employ recent techniques for differential elimination and rational function field simplification. 
Using models from epidemiology, chemical kinetics, and cancer modeling, we show that the algorithm produces generators with domain-specific interpretations that can inform model analysis and development.
\end{abstract}

\section{Introduction}

Consider a dynamical model given by a system of ordinary differential equations:
\begin{equation*}\label{eq:ODEmodel-1}
\Sigma = \left\{\begin{aligned}
\bx'(t) = \bF(\bmu, \bx(t), \bu(t)),\\    
\by(t) = \bg(\bmu, \bx(t), \bu(t)),
\end{aligned}\right.
\end{equation*}
where $\bx = (x_1,\ldots,x_n)$ are state variables, $\bmu = (\mu_1,\ldots,\mu_k)$ are time-independent parameters, $\bu = (u_1,\ldots,u_\ell)$ are inputs, and $\by = (y_1,\ldots,y_m)$ are outputs. 
Roughly speaking, structural global observability (in what follows, simply ``observability'') asks whether the state can be uniquely recovered from the known input and observed output under ideal conditions~\cite{KOU197389,HermannKrener}. 
Identifiability is the analogous property for the time-independent parameters of the model~\cite{BELLMAN1970329,Hong20}.

In practice, models frequently lack observability and/or identifiability~\cite{Massonis2021,Mahdi2014,Chatzis2014}.
Yet this need not render such models
completely opaque: 
even when individual states and parameters
are not observable, their combinations might be.
In particular, a function $h(\bmu, \bx)$ is said to be observable (or functionally observable~\cite{Montanari2022,Kravaris2024}) if its value can in principle be reconstructed from 
input-output data.
Together, such functions form
the \emph{observation field}~\cite{observability-algebra,Bartosiewicz1987} of the model,
\begin{equation*}\label{eq:intro-observations}
\cF_\Sigma := \{ h \in \CC(\bmu, \bx) \mid h \text{ is observable}\},
\end{equation*}
which is a nonlinear analogue of
the classical notion of an observation space for linear systems~\cite{FLIESS1986147}.
Besides observable states and parameters, $\cF_\Sigma$
 may contain some other functions of practical interest 
(e.g., the basic reproduction number~\cite{Pant2026} or the binding density~\cite[Section~3.3]{ilmer21}).
Furthermore, knowledge of
generators of $\cF_\Sigma$
can guide system identification~\cite{Norden2026}, 
the selection of complementary data streams~\cite{Pant2026}, and model reparametrization~\cite{Nemcova2016}.
In this paper, we consider the problem of finding a concise 
generating set of
the observation field $\cF_\Sigma$ for a given rational system (that is, when $\bf f$ and $\bg$ are rational functions).

From a
computational point of view, algorithms and software are available for testing
observability of state variables~\cite{Saccomani19,Hong20}. 
Furthermore, N\u{e}mcov\`a, Petreczky, and van Schuppen~\cite{Nemcova2016} proposed (building upon earlier works~\cite{NEMCOVA2012953,Bartosiewicz1987,Bartosiewicz1988}) a complete and rigorous procedure 
for computing generators of the observation field of                                         input-affine rational systems
from
sufficiently many Lie derivatives of the outputs.
The procedure is constructive but relies on heavy computational tools from commutative algebra and, to the best of our knowledge, has not
been implemented. 
A related problem of finding \emph{locally} observable functions~\cite{Evans2000,Stigter2015,Villaverde23,borgqvist2026framinglocalstructuralidentifiability} has typically been reduced 
to a search for analytic solutions of certain PDEs or ODEs, which itself is a hard problem with no complete algorithmic solution available.
Consequently, these methods do not guarantee to find a generating set (see also~\cite[Corollary~3]{balakrishnan2022datadrivenobservabilitydecompositionkoopman}).

The main result of the present paper is an algorithm for computing generators of the observation field for models with rational dynamics.
In particular, we allow arbitrary rational dependence on inputs.
Our algorithm is designed to be practical and we provide an implementation within the Julia package~\href{https://github.com/SciML/StructuralIdentifiability.jl}
{StructuralIdentifiability.jl}.
We apply it to models from the literature arising in epidemiology, chemistry, and cancer modeling, and show that it is efficient and produces generators that admit natural model-specific interpretations.
As in previous works~\cite{Hong20,Nemcova2016}, 
repeated Lie derivatives of the outputs 
serve as one of the building blocks in our algorithm, which we combine with the following two key technical contributions.

The 
first technical contribution allows us to extract observable functions from 
the Lie derivatives
even when
the dependence on the inputs is not affine.
More precisely, we show that if an observable function is written as a differential rational function in $\bu$ with coefficients in $\CC(\bmu,\bx)$, then all coefficients of its numerator and denominator are themselves observable
(\Cref{thm:split_input}).
This can be viewed as an extension of the classical result for input-affine systems due to Sontag and Wang~\cite{WANG1989279}.
Alternatively, one could reduce the problem to the input-affine case via state augmentation.
However, this would not only increase the dimension and hence the
computational cost (see~\cite{Martinelli2026} for a related discussion), but also introduce the artificial state variables that would have to be filtered from the output of the algorithm.

Still, high-order Lie derivatives tend to be large expressions jeopardizing the efficiency and interpretability.
Our second contribution reduces the required orders
of Lie derivatives. 
We show that instead of generating the observation field from scratch using repeated Lie derivatives, one may start with some known observable functions
and complete them with the Lie derivatives of lower order (\Cref{theorem:complete}). 
We propose to obtain such a starting set using identifiable functions, that is, observable functions depending only on the parameters, which can be read off from the input-output equations of the model~\cite{OllivierThesis,Ovchinnikov21}.

Our implementation builds on a recent efficient approach to computing input-output equations~\cite{Dong2023} and 
on our prior work on field generators simplification~\cite{demin2026simplegeneratorsrationalfunction}.
The latter simplification step turns out to be instrumental in making the result of the computation concise and interpretable.

The rest of the paper is organized as follows.
\Cref{sec:preliminaries} contains the necessary background and introduces the problem formally.
In \Cref{sec:informal}, we give an informal overview of the main ideas illustrating them with a specific example.
\Cref{sec:formal} contains the main theoretical results on input separation
and completion of the observation field and the description of the algorithm.
\Cref{sec:implementation} describes our implementation of the algorithm and demonstrates the impact of the aforementioned improvements on its efficiency.
Finally, \Cref{sec:examples} contains computational experiments and case studies involving models from the literature.

\paragraph{Acknowledgements.} We would like to thank Oren Bassik, Hoon Hong, Alexey Ovchinnikov, Binod Pant, and Alejandro F. Villaverde for helpful discussions.


\section{Preliminaries and problem statement}\label{sec:preliminaries}

Consider an ODE system with control in the state-space form:
 \begin{equation}\label{eq:ODEmodel}
   \Sigma = \begin{cases}
     \bx'(t) = \bF(\bmu, \bx(t), \bu(t)),\\
     \by(t) = \bg(\bmu, \bx(t), \bu(t)),
    \end{cases}
   \end{equation}
where: $\bx(t)$ are state variables, $\bmu$ are time-independent parameters, $\bu(t)$ are input variables, $\by(t)$ are output variables, and $\bF$ and $\bg$ are 
vectors
of elements of $\CC(\bmu, \bx(t), \bu(t))$, that is, rational functions in $\bmu, \bx(t), \bu(t)$.
In what follows, we may, for the sake of brevity, omit the explicit time dependence for $\bx, \by, \bu$.

The central notion of this paper is \emph{observability} which is usually defined for the state variables or functions of them.
On the other hand, for functions in parameters one typically uses term \emph{identifiability}.
In our context, this distinction is artificial, and we are interested in considering combinations potentially involving both states and parameter.
We will, therefore, use the term 
observability
for states and parameters.

In this section we will give a formal definition of 
observability
 using algebraic language which will be more convenient in our case.
This definition is mathematically equivalent to the standard analytic one~\cite[Proposition~3.4]{Hong20} (see also~\cite[Definition~2.5]{Ovchinnikov21}).

\begin{definition}[Differential rings and fields]
      A \emph{differential ring} $(R,\,')$ is a commutative ring with a derivation $'\colon R\to R$, that is, a map such that, for all $a, b \in R$, $(a + b)' = a' + b'$ and $(ab)' = a' b + a b'$ (Leibniz rule). 
      A {\em differential field} is a differential ring that is a field.
      For  
      $i \geqslant 0$,
      $a^{(i)}$ denotes the $i$-th order derivative of $a \in R$.
\end{definition}

\begin{example}
    \begin{itemize}
        \item[]
        \item The sets of smooth or analytic functions on an interval are natural examples of differential rings with respect to the standard derivation.
        The set of functions meromorphic on a domain is a differential field.

        \item Any polynomial differential equation in an unknown function $x$ can be represented as an element of an infinite-dimensional polynomial ring $\CC[x, x', x'', \ldots]$ in formal variables $x, x', x'', \ldots$ which has a natural structure of a differential ring with the derivation defined as $(x^{(i)})' = x^{(i + 1)}$ for all $i \geqslant 0$.
        Analogously, one can define the field $\mathbb{C}(x, x', x'', \ldots)$ with the derivation $(x^{(i)})' = x^{(i + 1)}$.
    \end{itemize}
\end{example}

\begin{notation}
  Let $x$ be an element of a differential ring (e.g., a smooth function or a differential indeterminate) and $h \in \mathbb{Z}_{\geqslant 0}$. We introduce
  \[
      x^{(<h)} := (x, x', \ldots, x^{(h - 1)}),\quad
      x^{(\infty)} := (x, x', x'', \ldots).
  \]
  $x^{(\leqslant h)}$ is defined analogously.
  If $\bx = (x_1, \ldots, x_n)$ is a tuple of elements of a differential ring, then
  \[
      \bx^{(< h)} := (x_1^{(< h)}, \ldots, x_n^{(<h)}),\quad 
      \bx^{(\infty)} := (x_1^{(\infty)}, \ldots, x_n^{(\infty)}).
  \]
  In particular, $\CC[x^{(\infty)}]$ and $\CC(x^{(\infty)})$ will denote the ring of polynomials and the field of rational functions in $x$ and its derivatives, respectively. Elements of this ring (resp., field) are polynomials (resp.,  rational functions) in finitely many of the $x^{(\infty)}$.
\end{notation}

\begin{definition}[Observability]
    \label{def:identifiability2}
    A function $h \in \mathbb{C}(\bmu, \bx, \bu^{(\infty)})$ is called \emph{observable} if there exist polynomials $P_1, P_2 \in \mathbb{C}[\by^{(\infty)}, \bu^{(\infty)}]$ such that $P_2$ does not vanish identically on all the analytic solutions of~\eqref{eq:ODEmodel} and, for every analytic solution $\bmu^\ast, \bx^\ast, \bu^\ast, \by^\ast$ of~\eqref{eq:ODEmodel} on which $P_2$ does not vanish, we have
    \begin{equation}\label{eq:observ_def}
        h(\bmu^*, \bx^*, \bu^*) = \frac{P_1(\by^*, \bu^*)}{P_2(\by^*, \bu^*)}.
    \end{equation}
\end{definition}

\begin{remark}
    Intuitively, \Cref{def:identifiability2}
    corresponds to the possibility (under the assumption of complete and noise-free data) to reconstruct the value of the function in question.
    On one hand, if the quality of the data allows to estimate high-order derivatives of $\by^*$ and $\bu^*$, the equality~\eqref{eq:observ_def} already provides a theoretical approach to such estimation.
    But furthermore, as has been shown in~\cite[Proposition~3.4]{Hong20} (and extended in~\cite[Proposition~3.1]{Ovchinnikov21}), the possibility to reconstruct the value by any means (as given by the analytic definition~\cite[Definition~2.3]{Hong20}, \cite[Definition~2.5]{Ovchinnikov21}) always implies the existence of a formula~\eqref{eq:observ_def}.
\end{remark}

The set of all observable functions is an important object describing what we in principle can know about the state of the system.
Since the set of observable functions is closed under arithmetic operations (one can simply perform the corresponding arithmetic operations on the right-hand sides of~\eqref{eq:observ_def}), it forms a field.
One typically considers not this whole field but its restriction to states and parameters, thus excluding the inputs, which leads to the following definition.

\begin{definition}[Observation field~\cite{SontagWangIO,Bartosiewicz1987}]
\label{def:field_of_ident}
    For a model~\eqref{eq:ODEmodel}, the set
    \begin{equation}\label{eq:obs_field_def}
    \mathcal{F}_\Sigma := \{ h \in \CC(\bmu, \bx) \mid h \text{ is observable}\}
    \end{equation}
    is called the~\emph{observation field} of the model.
\end{definition}

It may seem that, by restricting ourselves to $\CC(\bmu, \bx)$ in~\eqref{eq:obs_field_def} we may lose some interesting observable
functions from $\CC(\bmu, \bx, \bu^{(\infty)})$.
As we shall see, this is not the case because all observable
functions can be expressed in terms of $\mathcal{F}_{\Sigma}$ and $\bu^{(\infty)}$ (\Cref{thm:split_input}).

\begin{example}
Whenever
the states of the model are observable and its parameters are identifiable, the observation field coincides with the whole field of rational functions, that is, $\mathcal{F}_{\Sigma} = \CC(\bmu, \bx)$.
\end{example}

\begin{example}
    Consider a model with a single state $x'(t) = \mu_1 x(t)$ and single output $y(t) = \mu_2 x(t)$.
    The equation admits explicit solution $x(t) = x_0 e^{\mu_1 t},\; y(t) = \mu_2 x_0 e^{\mu_1 t}$.
    Therefore, if $\mu_2 x_0 \neq 0$, the knowledge of $y(t)$ and its derivatives allows to reconstruct only $\mu_1$ and $\mu_2 x_0$ but not $x_0$.
    In terms of \emph{structural} observability, it is possible to reconstruct $\mu_1$ and $\mu_2 x(t)$ but not $x(t)$.
    Furthermore, every observable function is a combination of these, since the output is unchanged under the symmetry $x \mapsto \lambda x, \mu_2 \mapsto \frac{\mu_2}{\lambda}$ for every $\lambda \neq 0$.
    Thus, the observation field is generated by $\mu_1$ and $\mu_2 x(t)$.
\end{example}

\begin{remark}[On other definitions]
    A different definition of the observation field, as the field containing the outputs and invariant under the family of vector fields obtained from $\bF$
    via evaluation of $\bu$, also appears in the literature (e.g.~\cite[Definition III.3]{Nemcova2016}, \cite[Definition~4]{Bartosiewicz1987}).
    Our \Cref{thm:split_input} implies that it is equivalent to~\Cref{def:field_of_ident}.
\end{remark}


\begin{mainprob}
\begin{description}
  \item[Given:] 
  an ODE model $\Sigma$, as in~\eqref{eq:ODEmodel}, given by $ \bx' = \bF(\bmu, \bx, \bu)$ and $\by = \bg(\bmu, \bx, \bu)$
  \item[Find:] a set of generators $h_1,\ldots, h_\ell\in \mathbb{C}(\bmu, \bx)$ of the 
  observation field $\mathcal{F}_\Sigma$
\end{description}
\end{mainprob}

\begin{remark}
    Different solutions to the main problem may not be equally concise and useful.
    In the next section, for the
    example~\eqref{eq:running}, we will construct three different sets of generators: \eqref{eq:gens_Lie_only}, \eqref{eq:gens_io}, and \eqref{eq:gens_final}.
    The first one, \eqref{eq:gens_Lie_only}, contains seven generators of degrees up to eleven, while 
    the final one, \eqref{eq:gens_final}, has only four generators of degree two, making it potentially more amenable to interpretation.
    
\end{remark}


\section{Informal description of solution}
\label{sec:informal}

In this section we present a high-level informal description of our approach to computing generators of the observation field. 
To keep notation simple, we consider system as in~\eqref{eq:ODEmodel} with a single output $y(t)$ and a single input $u(t)$.
The description will be illustrated using the following 
example system $\Sigma$:
\begin{equation}\label{eq:running}
\begin{cases}
    x'(t) = \dfrac{(\mu_1 x(t) + \mu_2)^2}{\mu_3 + u(t)},\\
    y(t) = \mu_4 x(t) + \mu_5
\end{cases}
\end{equation}
We recall that the Lie derivative of a function 
$h(\bmu, \bx, \bu, \bu', \ldots)$
with respect to the vector field~\eqref{eq:ODEmodel} is defined~as 
\[
    \cL_\Sigma(h) := \sum\limits_{i = 1}^n f_i \frac{\partial}{\partial x_i}(h) + \sum\limits_{j = 1}^\ell\sum\limits_{k = 0}^\infty u_j^{(k + 1)} \frac{\partial }{\partial u_j^{(k)}} (h).
\]
It represents the total derivative of $h$ with respect to time.


\paragraph{Generating all observable functions.}

By~\eqref{eq:ODEmodel}, $y$ is a rational function of $\bmu, \bx, u$.
Furthermore, by computing the Lie derivative, one can show that, for any $i \geqslant 0$, $y^{(i)}$ is a rational function of $\bmu, \bx, u, u', \ldots, u^{(i)}$.

Then~\Cref{def:identifiability2} implies that all observable functions can be obtained from $g, \mathcal{L}_{\Sigma}(g), \mathcal{L}_{\Sigma}^2(g), \ldots$ and $u, u', \ldots$ by arithmetic operations.
In other words, these derivatives can be viewed as the Taylor coefficients of the functions $y(t)$ and $u(t)$, respectively, which, in the case of analytic solutions, determine them uniquely~\cite{P1978}.

In fact, general field-theoretic considerations show~\cite[Proposition~1]{Bartosiewicz1987} that it is sufficient to take only finitely many Lie derivatives.
Furthermore, using~\cite[Theorem~3.16]{Hong20}, \cite[Corollary IV.4]{Nemcova2016}, or~\Cref{theorem:complete} one can show that Lie derivatives up to order $|\bx| + |\bmu|$ (inclusive) suffice.
For specific systems, this bound can be refined:
\Cref{theorem:complete} implies that every observable function
in our running example is a rational function of
$u^{(\infty)}$ and of the  
Lie derivatives of the order up to four
(although $|\bx| + |\bmu| = 6$):
\begin{equation}\label{eq:gens_with_u}
\underbrace{\mu_4 x + \mu_5}_{=g},\;\; \underbrace{\mu_4 \frac{(\mu_1 x + \mu_2)^2}{\mu_3 + u}}_{= \mathcal{L}_{\Sigma}(g)},\; \underbrace{\mu_4 \frac{(\mu_1 x + \mu_2)^2}{(\mu_3 + u)^2} (2 \mu_1 (\mu_1 x + \mu_2)-u')}_{=\mathcal{L}_{\Sigma}^2(g)},\;\; 
\underbrace{\mathcal{L}_{\Sigma}^3(g),\;\; \mathcal{L}_{\Sigma}^4(g)}_{\text{too large to display}}.
\end{equation}
Since~\Cref{theorem:complete} is formulated in terms of the existence of an algebraic relation between the Lie derivatives, its condition can be verified using the observability rank condition~\cite{HermannKrener}.

\paragraph{Carving out the observation field.}
The outlined approach does not completely solve the main problem of computing \emph{the observation field} since the produced expressions
~\eqref{eq:gens_with_u} may contain inputs and their derivatives.
If the original system~\eqref{eq:ODEmodel} is input-affine (that is, $\bF(\bmu, \bx, u) = \bF_0(\bmu, \bx) + \bF_1(\bmu, \bx) u$), then the results of Sontag and Wang~\cite[Theorem~1]{WANG1989279} imply that the observation field is generated by the coefficients of the Lie derivatives
viewed as polynomials in $u^{(\infty)}$.
This result is not applicable in~\eqref{eq:gens_with_u}, where dependency on the input is rational.

We extend 
the result of Sontag and Wang
on ODE models with arbitrary rational dependence on inputs: if an observable rational function is written as a rational function in the input and its derivatives $u,u',u'',\ldots$ with coefficients in $\CC(\bmu,\bx)$, then all coefficients of its numerator and denominator are themselves observable (\Cref{thm:split_input}).
This implies, in particular, that the coefficients (with respect to $u^{(\infty)}$) of the Lie derivatives are observable. In the running example, for the first order Lie derivative we get
\[
\mathcal{L}_{\Sigma}(g) = \frac{A}{B + u} \quad\Longrightarrow\quad 
A~\text{and}~B~\text{are observable},
\]
where $A = \mu_4(\mu_1 x + \mu_2)^2$ and $B = \mu_3$.

Furthermore, instead of successive Lie derivatives as in~\eqref{eq:gens_with_u}, one can alternate between taking Lie derivatives and applying \Cref{thm:split_input}.
With this approach,  
fewer differentiation steps may be needed 
(that is, the condition of~\Cref{theorem:complete} would be satisfied earlier).
In the context of our running example, it turns out to be sufficient to take the Lie derivative only up to order three.
This results in the following generators of the observation field:
\begin{equation}\label{eq:gens_Lie_only}
    \underbrace{x \mu_4 + \mu_5}_{=g},\; 
    \underbrace{
    \mu_4(\mu_1 x + \mu_2)^2,\; \mu_3
    }_{\text{coefficients of }\mathcal{L}_\Sigma(g)},\; \underbrace{2 \mu_1 \mu_4(\mu_1 x + \mu_2)^3,\; \mu_3}_{\text{coefficients of }\mathcal{L}_\Sigma(\mu_4(\mu_1x + \mu_2)^2)},
    \underbrace{6\mu_1^2\mu_4(\mu_1 x + \mu_2)^4,\; \mu_3}_{\text{coefficients of }\mathcal{L}_\Sigma(2\mu_1 \mu_4(\mu_1x + \mu_2)^3)}
\end{equation}

\paragraph{Making the computation more efficient and the result more useful.}
The two preceding subsections already yield an algorithm for computing generators of the observation field (reminiscent of the one from~\cite{Nemcova2016}).
However, in practice, high-order Lie derivatives are usually huge symbolic expressions, which makes this approach not very efficient computationally and its output 
too difficult to use.

Our second theoretical result, \Cref{theorem:complete}, provides a sort of scaffolding: it is sufficient to take fewer Lie derivatives given that we have already obtained some observable functions in another way.
This is accomplished by determining the observable functions that depend only on the parameters, i.e., elements of the subfield $\cF_{\Sigma} \cap \CC(\bmu)$. 
For such functions, observability is usually referred to as identifiability. 
One standard approach to this problem uses input-output equations~\cite{OllivierThesis}, that is, differential equations satisfied by $y(t)$ and $u(t)$ with coefficients depending on $\bmu$. 
For our running example, eliminating the states gives
\begin{equation}\label{eq:uwu-io}
    \frac{(\mu_1 \mu_5 - \mu_2 \mu_4)^2}{\mu_4} + 2\mu_1 \frac{\mu_2 \mu_4 - \mu_1 \mu_5}{\mu_4} y + \frac{\mu_1^2}{\mu_4} y^2 - \mu_3 y' - u y' = 0 
\end{equation} 
For a sufficiently exciting input, the known functions $y, y^2, y', uy'$ are linearly independent over constants, so the coefficients in~\eqref{eq:uwu-io} can be uniquely determined. 
For example, this can be done by evaluating~\eqref{eq:uwu-io} at sufficiently many time points and solving the resulting linear system for the coefficients. 
Hence the coefficients are observable.

\Cref{theorem:complete} implies that these coefficients can be complemented to a generating set of the observation field by considering only 
the output and its first order Lie derivative
and eliminating the inputs as described above.
The resulting generators are
\begin{equation}\label{eq:gens_io}
\underbrace{\mu_4 x + \mu_5,\; \mu_3,\; \mu_4 (\mu_1 x + \mu_2)^2}_{\text{from Lie derivatives}},\; \underbrace{\frac{(\mu_1\mu_5 - \mu_2\mu_4)^2}{\mu_4}, \; \frac{\mu_1 (\mu_2 \mu_4 - \mu_1 \mu_5)}{\mu_4},\; \frac{\mu_1^2}{\mu_4}, \mu_3}_{\text{from input-output equation~\eqref{eq:uwu-io}}}
\end{equation}
These expressions are simpler compared to the ones obtained by a more straightforward method~\eqref{eq:gens_Lie_only}: they have lower degree and the role of different parameter combinations is easier to see. However, there is still some room for simplification: for example, the fourth generator can be expressed in terms of the fifth and the sixth.
Therefore, we finish the computation by applying a simplification algorithm developed in our previous work~\cite{demin2026simplegeneratorsrationalfunction} and obtaining:
\begin{equation}\label{eq:gens_final}
\mu_4 x + \mu_5, \; \mu_3, \; \frac{\mu_1^2}{\mu_4}, \; \frac{\mu_1 \mu_5 - \mu_2 \mu_4}{\mu_1}.
\end{equation}
Note that we could have applied this simplification procedure to our earlier result~\eqref{eq:gens_Lie_only} and obtain the same generators.
However, as we show in~\Cref{sec:examples} (see~\Cref{table:runtimes}), the size of expressions typically makes this computation much harder and even out of reach.


\section{Formal description of solution}
\label{sec:formal}


\subsection{Splitting with respect to inputs}\label{sec:sep-inputs}

The goal of this section is to prove~\Cref{thm:split_input} with the following rationale behind it.
While we have defined 
observability
for functions in parameters, states, and inputs (see~\Cref{def:identifiability2}), the theorem implies that an 
observable
function involving inputs is a function of input-free 
observable
functions and inputs themselves.
This, in particular, shows that the observable
functions inside $\CC(\bmu, \bx)$ as in~\Cref{def:field_of_ident} together with $\bu^{(\infty)}$
generate all the observable functions in the sense of~\Cref{def:identifiability2}.

\begin{theorem}\label{thm:split_input}
    Let $h \in \CC(\bmu, \bx, \bu^{(\infty)})$ be an 
    observable
    function.
    We write $h = \frac{A}{B}$, where $A, B \in \CC(\bmu, \bx)[\bu^{(\infty)}]$ are coprime and at least one of them has a coefficient with respect to $\bu^{(\infty)}$ equal to one.

    Then all the coefficients of $A$ and $B$ viewed as polynomials in $\bu^{(\infty)}$ are 
    observable.
\end{theorem}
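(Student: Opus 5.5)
The proof turns observability into an algebraic statement about differential fields and then uses the algebraic independence of the inputs over that field. Let $K := \CC(\bmu, \bx, \bu^{(\infty)})$; it is a differential field under $\cL_\Sigma$, with $\bmu' = 0$, $\bx' = \bF$, $(u_j^{(k)})' = u_j^{(k+1)}$. Inside $K$, let $E$ be the differential subfield generated by $\bg$ and $\bu$, that is, $E = \CC(\bg^{(\infty)}, \bu^{(\infty)})$ with $\bg^{(i)} = \cL_\Sigma^i(\bg)$. \Cref{def:identifiability2} translates into the statement that $h$ is observable if and only if $h \in E$. Indeed, the formula~\eqref{eq:observ_def} says exactly that $h = P_1/P_2$ after substituting $\by^{(i)} \mapsto \cL_\Sigma^i(\bg)$, and the nonvanishing condition on $P_2$ says this substitution of $P_2$ is nonzero in $K$. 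I will take this equivalence as the working definition, as in the discussion of Lie derivatives in \Cref{sec:informal}. It then suffices to show that if $A/B \in E$ with the normalization of the statement, every coefficient of $A$ and $B$ lies in $E$.

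\textbf{Step 1 (independence of the inputs).} Let $F := E \cap \CC(\bmu, \bx)$, which is the observation field $\cF_\Sigma$. I claim that $E = F(\bu^{(\infty)})$ and, more importantly, that $\bu^{(\infty)}$ is algebraically independent over $\CC(\bmu,\bx)$, hence over $F$. The independence is clear, since $\bu^{(\infty)}$ are free indeterminates over $\CC(\bmu,\bx)$ in $K$. The natural route is a coefficient-uniqueness argument: the elements of $E$ are closed under arithmetic, and $\bu^{(\infty)} \subset E$.

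\textbf{Step 2 (extracting coefficients).} Write $h = A/B$ in lowest terms over $L := \CC(\bmu,\bx)$, in the polynomial ring $L[\bu^{(\infty)}]$, normalized so that a chosen coefficient equals $1$. This representation is unique, because $L[\bu^{(\infty)}]$ is a UFD with units $L^\times$ and the normalization kills the unit ambiguity. Now consider the field automorphisms of $K$ over $E$; more robustly, use a Galois-type uniqueness argument. Let $M$ be the algebraic closure of $E$ in $K$; automorphisms are hard to control here, so I prefer a direct linear-algebra argument. Since $h \in E$, there is a representation $h = P/Q$ with $P, Q \in E_0[\bu^{(\infty)}]$, where $E_0 := E \cap L$. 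To get it, observe that each $\bg^{(i)}$ lies in $L(\bu^{(\le i)})$, so we cannot directly claim its coefficients lie in $E_0$; this is precisely the crux. If such a representation exists, then $P = cA$ and $Q = cB$ for some $c \in L^\times$, by uniqueness of reduced forms. Since $P$ and $Q$ have coefficients in $E_0$ and the normalized coefficient of $A$ or $B$ is $1$, we get $c \in E_0$, and so all coefficients of $A$ and $B$ lie in $E_0$.

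\textbf{Main obstacle: showing $E = E_0(\bu^{(\infty)})$.} This is equivalent to the claim that the coefficients of each $\bg^{(i)}$ lie in $E$, and it is the real content of the theorem. I would attack it with a field-of-definition argument. The field $E$ is finitely generated over $\CC(\bu^{(\infty)})$ in each truncation, so let $D \subseteq L$ be the minimal field of definition of the extension $E \supseteq \CC(\bu^{(\infty)})$ relative to the purely transcendental parameters $\bu^{(\infty)}$. Concretely, $E \cdot L(\bu^{(\infty)})$ compared with $E$: the linear disjointness of $L$ and $\CC(\bu^{(\infty)})$ over $\CC$ gives that $E$ is obtained by base change from $E_0 = E \cap L$, provided $E$ is stable under the substitutions $\bu^{(\infty)} \mapsto \bu^{(\infty)} + \text{const}$ and similar specializations. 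The tool I would try first is specialization, where the derivation matters. Plugging constants or generic values for the $u_j^{(k)}$ is not an operation on $E$. However, differentiating the relation $h \in E$ does keep us in $E$, and the partial derivatives $\partial/\partial u_j^{(k)}$ of the top-order input variable can be recovered. For $N$ the maximal order of $\bu$ appearing in an element $e \in E$, the derivative $e'$ is affine in $\bu^{(N+1)}$ with coefficient $\partial e/\partial u_j^{(N)}$, and these coefficients lie in $E$ since $\bu^{(N+1)} \in E$ is transcendental over the rest. Iterating this gives closure of $E$ under the operators $\partial/\partial u_j^{(k)}$. A field of this form, namely a subfield of $L(\bu^{(\infty)})$ containing $\bu^{(\infty)}$ and closed under all $\partial/\partial u_j^{(k)}$, equals $(E\cap L)(\bu^{(\infty)})$. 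This follows from a standard lemma: if a function has all partial derivatives in the field, then applying the Taylor expansion in $\bu$ around a generic point of $E$ produces coefficients in $E$. The last clause, that Taylor coefficients at a point of $E$ lie in $E$, is the step I expect to be most delicate. I would handle it by expanding at the point $\bu^{(\infty)}$ itself, using shifts by new independent variables.
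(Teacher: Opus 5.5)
Your reduction is sound and matches the paper's structure. Observability of $h$ is equivalent to $h\in E=\CC(\cL_\Sigma^{(\infty)}(\bg),\bu^{(\infty)})$; this is \Cref{lem:generic_sol}, whose analytic half needs an existence argument such as Seidenberg's embedding theorem, which your one-line justification skips. Once $E=E_0(\bu^{(\infty)})$ is known, your uniqueness argument finishes the proof, provided you first cancel $\gcd(P,Q)$ inside $E_0[\bu^{(\infty)}]$; gcds do not change when passing from $E_0$ to $L$.

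However, your proof of $E=E_0(\bu^{(\infty)})$, which you rightly call the real content, has a genuine gap at its first step. You claim that for $e\in E$ of order $N$ in $u_j$, the coefficient $\partial e/\partial u_j^{(N)}$ of $u_j^{(N+1)}$ in $e'$ lies in $E$ ``since $\bu^{(N+1)}\in E$ is transcendental over the rest''. That does not follow, for two reasons. First, $E$ itself contains elements such as $\cL_\Sigma^{N+1}(\bg)$ that involve $\bu^{(N+1)}$. Second, in general $at+b\in E$ and $t\in E$, with $t$ transcendental over a field containing $a,b$, do not give $a\in E$: with $L=\CC(x)$, the field $\CC(u^{(\infty)},\,xu'+x^2)$ contains $xu'+x^2$ and $u'$ but not $x$.

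The missing idea is the one in \Cref{lem:separant}. $E$ is finitely generated over $\CC(\bu^{(\infty)})$, say by elements of order at most $H\geqslant N$ in $u_j$. Differentiate $e$ not once but $H-N+1$ times, so that $e^{(H-N+1)}=u_j^{(H+1)}\,\partial e/\partial u_j^{(N)}+(\text{terms of order}\leqslant H)$. Then apply the field automorphism $u_j^{(H+1)}\mapsto u_j^{(H+1)}+1$. It fixes the generators and stabilizes $\CC(\bu^{(\infty)})$, hence maps $E$ onto itself, and subtracting gives $\partial e/\partial u_j^{(N)}\in E$.

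The remaining steps are also not yet an argument. Iterating the top-order statement only produces $\partial^i e/\partial (u_j^{(N)})^i$. It does not give partial derivatives in lower-order variables $u_j^{(k)}$ with $k<N$; reaching those already requires splitting $e$ into its coefficients as a rational function of $t=u_j^{(N)}$. Your ``standard lemma'' via Taylor expansion at a generic point is exactly the descent statement you still have to prove, and you leave it open. The paper supplies it in univariate form as \Cref{lem:univariate}: if $t$ and all $\partial_t^i p$ lie in a field, then so do the coefficients of the numerator and monic denominator of $p$. The proof uses Pad\'e approximation via the extended Euclidean algorithm.

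Combined with \Cref{lem:separant}, this extracts coefficients of strictly smaller total order in $\bu$ that lie in $E$. An induction on total order then yields $E=E_0(\bu^{(\infty)})$, after which your Step 2 applies. The paper instead inducts directly on $A,B$, using a $\QQ$-linear combination $C$ coprime to $B_0$ to fix the normalization. So your skeleton can be completed, but only by supplying precisely the two lemmas that make up the paper's proof.
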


\begin{example}
    Consider the second Lie derivative of the output of the system~\eqref{eq:running} from~\Cref{sec:informal} (see~\eqref{eq:gens_Lie_only}):
    \[
    \cL_\Sigma^2(g) = \mu_4 \frac{(\mu_1 x + \mu_2)^2}{(\mu_3 + u)^2} (2 \mu_1 (\mu_1 x + \mu_2)-u') = \frac{ \boxed{2 \mu_1 \mu_4(\mu_1 x + \mu_2)^3} - \boxed{\mu_4(\mu_1 x + \mu_2)^2} u'}{\boxed{\mu_3^2} + \boxed{2\mu_3} u + \boxed{1}\cdot u^2}.
    \]
    The coefficients with respect to $u^{(\infty)}$ are boxed in the equation above.
    One of them (in front of $u^2$) is equal to one, so~\Cref{thm:split_input} implies that all the other coefficients,  $2 \mu_1\mu_4(\mu_1 x + \mu_2)^3, \mu_4(\mu_1 x + \mu_2)^2, \mu_3^2, 2\mu_3$ are observable.
\end{example}

In order to prove the theorem, we will first reformulate the definition of identifiability in terms of abstract differential fields.

\begin{notation}[Differential field of an ODE system]\label{not:diff_field}
    Consider an ODE model as in~\eqref{eq:ODEmodel},
    \[
    \bx' = \bF(\bmu, \bx, \bu),\quad \by = \bg(\bmu, \bx, \bu).
    \]
    We will construct \emph{the differential field of the system} as follows.
    We start with the
    differential field $\CC(\bM)$, where $\bM$ are constants (i.e., $\bM' = 0$) independent over $\CC$ with $|\bM| = |\bmu|$, where $|\bM|$ denotes the number of elements in $\bM$.
    Next, we adjoin infinitely many tuples of independent elements $\bU, \bU', \bU'', \ldots$, each of size $|\bu|$, and define the derivation on $\CC(\bM, \bU^{(\infty)})$ by $(\bU^{(i)})' = \bU^{(i + 1)}$ for $i \geqslant 0$. 
    Finally, we adjoin $|\bx|$ independent variables $\bX$ and define the derivation on $\CC(\bM, \bX, \bU^{(\infty)})$ by 
    \[
    \bX' = \bF(\bM, \bX, \bU).
    \]
    The resulting differential field $\CC(\bM, \bX, \bU^{(\infty)})$ contains elements $\bY$ defined by $\bY = \bg(\bM, \bX, \bU^{(\infty)})$.
\end{notation}

\begin{lemma}\label{lem:generic_sol}
    Using~\Cref{not:diff_field}, a function $h(\bmu, \bx, \bu) \in \CC(\bmu, \bx, \bu^{(\infty)})$ is 
    observable
    if and only if, the following holds inside the differential field of the system:
    \[
    h(\bM, \bX, \bU) \in \CC(\bY^{(\infty)}, \bU^{(\infty)}).
    \]
\end{lemma}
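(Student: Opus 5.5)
We prove the two implications separately. The key bridge between analytic solutions and the abstract differential field is that the differential field $\CC(\bM, \bX, \bU^{(\infty)})$ of the system is isomorphic to the field of rational functions in the "Taylor coefficients" of a generic solution. Concretely, $\bM, \bX, \bU^{(\infty)}$ are algebraically independent over $\CC$. Every derivative $\bX^{(i)}$ and $\bY^{(i)}$ is a rational function of $\bM, \bX, \bU^{(\le i)}$, obtained by iterating the Lie derivative $\cL_\Sigma$. Hence $\CC(\bM,\bX,\bU^{(\infty)})$ can be identified with $\CC(\bmu,\bx,\bu^{(\infty)})$ equipped with the derivation $\cL_\Sigma$, and $\bY^{(i)}$ corresponds to $\cL_\Sigma^i(\bg)$.

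\textbf{Observable implies membership.} Given $P_1, P_2$ as in \Cref{def:identifiability2}, consider the rational function
\[
Q := h\, P_2(\cL_\Sigma^{\infty}(\bg), \bu^{(\infty)}) - P_1(\cL_\Sigma^{\infty}(\bg), \bu^{(\infty)}) \in \CC(\bmu,\bx,\bu^{(\infty)}),
\]
where $\cL_\Sigma^{\infty}(\bg)$ stands for the tuple $\bg, \cL_\Sigma(\bg), \cL_\Sigma^2(\bg), \ldots$ substituted for $\by^{(\infty)}$. We aim to show that $Q \equiv 0$ and that $P_2$ evaluated the same way is nonzero; then $h(\bM,\bX,\bU) = P_1/P_2 \in \CC(\bY^{(\infty)},\bU^{(\infty)})$.

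The tool is the existence theorem for analytic ODEs. Pick any point $(\bmu^*, \bx_0, \bu_0, \bu_1, \ldots, \bu_N)$ outside the poles of $\bF$, $\bg$, and the relevant Lie derivatives. Choose the input $\bu^*$ to be the polynomial with these Taylor coefficients at $t=0$. Cauchy--Kovalevskaya (or simply Picard with analytic data) then gives an analytic solution through this point. Its value $y^{*(i)}(0)$ equals $\cL_\Sigma^i(\bg)$ evaluated at the point.

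By hypothesis, $Q$ vanishes at every such point where $P_2 \ne 0$. These points form a Zariski-dense set, so $Q = 0$ identically.

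The same realization argument gives nonvanishing: if $P_2(\cL_\Sigma^\infty \bg, \bu^{(\infty)})$ were the zero rational function, then $P_2$ would vanish on every analytic solution. This needs a small care step. Every analytic solution, at a generic time $t_0$, has Taylor data avoiding the poles, so its $P_2$-value is given by the evaluated rational function and hence vanishes near $t_0$. By the identity theorem it vanishes everywhere, contradicting the hypothesis on $P_2$.

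\textbf{Membership implies observable.} Suppose $h(\bM,\bX,\bU) = R_1/R_2$ with $R_i \in \CC[\bY^{(\infty)},\bU^{(\infty)}]$ and $R_2 \ne 0$. Take $P_i := R_i$. Then $P_2(\cL_\Sigma^\infty\bg, \bu^{(\infty)})$ is a nonzero rational function, so by the realization argument above it is nonzero on some analytic solution, and $P_2$ does not vanish identically on all analytic solutions.

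For any analytic solution on which $P_2$ does not vanish, the identity $h\,P_2 = P_1$ holds as rational functions. Evaluating along the solution at times where everything is defined shows that $h = P_1/P_2$ holds there, and by analyticity this extends wherever both sides are defined.

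\textbf{Main obstacle.} The delicate point is the correspondence between "vanishes on all analytic solutions" and "is zero in the differential field." This requires showing that the Taylor data of analytic solutions at generic times fill a Zariski-dense subset of the $(\bmu,\bx,\bu^{(\le N)})$-space, and conversely that each solution's data lie off the pole locus at generic times. I would handle this by constructing polynomial inputs with prescribed jets together with the local existence theorem for analytic ODEs, as cited in~\cite[Proposition~3.4]{Hong20}.
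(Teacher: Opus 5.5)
Your proof is correct, but it establishes the link between ``zero in the differential field'' and ``vanishes on all analytic solutions'' by a different route than the paper.

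The paper first proves \Cref{lem:field_to_func}: a polynomial $P(\bM,\bX,\bY,\bU)$ is zero in the differential field if and only if $P$ vanishes on every analytic solution.

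\begin{itemize}
\item For the direction ``nonzero $\Rightarrow$ nonvanishing on some solution'', the paper applies the Seidenberg embedding theorem to a finitely generated differential subfield. The images of $\bM,\bX,\bU,\bY$ then form a meromorphic solution, on which $P$ is nonzero.
\item For the other direction, the paper substitutes $\bg_i$ for $\by^{(i)}$ and uses the algebraic independence of $\bM,\bX,\bU^{(\infty)}$. This is essentially your ``generic time $t_0$'' argument.
\end{itemize}

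\Cref{lem:generic_sol} then follows formally, by applying this correspondence to the numerator and denominator of $h - P_1/P_2$.

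You instead prove by hand the special case of Seidenberg that is actually needed. Solutions with polynomial inputs, obtained from Cauchy's existence theorem, realize every jet in a nonempty Zariski-open subset of $(\bmu,\bx,\bu^{(\leqslant N)})$-space. Hence a rational function vanishing on all of these jets is zero.

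Comparing the two:
\begin{itemize}
\item Your approach is more elementary and self-contained. It exploits that the field here is purely transcendental with an explicit derivation, and it makes explicit the pointwise issues (poles, evaluation at generic times) that the paper passes over.
\item The paper's approach is shorter and produces a reusable two-sided dictionary between the differential field and analytic solutions.
\end{itemize}

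One small ordering point. The set of good points where $P_2(\cL_\Sigma^\infty(\bg),\bu^{(\infty)})\neq 0$ is Zariski-dense only once you know that this rational function is nonzero, which you prove only in the following paragraph. It is cleaner to say that $Q\cdot P_2(\cL_\Sigma^\infty(\bg),\bu^{(\infty)})$ vanishes on the whole open set of good points, and then cancel the nonzero factor.
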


\begin{remark}
    The elements $\bM, \bX, \bU, \bY$ of the differential field are sometimes referred as \emph{a generic solution} of the model (e.g., \cite[Definition~6]{Ovchinnikov21-2790}, \cite[Section II.6]{Ritt}).
    The characterization of observability given by~\Cref{lem:generic_sol}
    is often used as a definition of observability~\cite[Definition~7]{Ovchinnikov21-2790}.

    Combining~\Cref{lem:generic_sol} with \cite[Proposition~3.4]{Hong20} one can show that our~\Cref{def:identifiability2} is equivalent to the analytic definition of observability (e.g, \cite[Definition~2.3]{Hong20}).
\end{remark}

\begin{lemma}\label{lem:field_to_func}
    Let $P(\bmu, \bx, \by, \bu) \in \CC[\bmu, \bx, \by^{(\infty)}, \bu^{(\infty)}]$ be a polynomial.
    Then $P(\bM, \bX, \bY, \bU) = 0$ if and only if $P(\bmu^*, \bx^*, \by^*, \bu^*) = 0$ for any analytic solution $\bmu^*, \bx^*, \by^*, \bu^*$ of~\eqref{eq:ODEmodel}.
\end{lemma}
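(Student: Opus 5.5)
The plan is to reduce both sides of the equivalence to the vanishing of one rational function $Q \in \CC(\bM, \bX, \bU^{(\infty)})$, using the Lie derivative $\cL_\Sigma$ as the common bridge between the abstract differential field of \Cref{not:diff_field} and analytic solutions.

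\textbf{Step 1: the rational function $Q$.} By construction, the derivation on $\CC(\bM, \bX, \bU^{(\infty)})$ acts on any element exactly as $\cL_\Sigma$ does, after renaming $\bmu, \bx, \bu$ to $\bM, \bX, \bU$. Hence, inside the differential field,
\[
\bY^{(i)} = \cL_\Sigma^i(\bg)(\bM, \bX, \bU^{(\infty)}) \quad \text{for all } i \geqslant 0.
\]
Substituting these expressions into $P$ gives $P(\bM,\bX,\bY,\bU) = Q(\bM, \bX, \bU^{(\infty)})$, where
\[
Q := P\bigl(\bmu, \bx, \cL_\Sigma^{(\infty)}(\bg), \bu^{(\infty)}\bigr) \in \CC(\bmu, \bx, \bu^{(\infty)}).
\]
Since $\bM, \bX, \bU^{(\infty)}$ are algebraically independent, $P(\bM,\bX,\bY,\bU)=0$ holds if and only if $Q = 0$ as a rational function. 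An induction using the formula for $\cL_\Sigma$ shows that every denominator of $\cL_\Sigma^i(\bg)$ is a product of powers of the denominators $D$ of the entries of $\bF$ and $\bg$.

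\textbf{Step 2: the analytic side.} Along any analytic solution $\bmu^*, \bx^*, \bu^*, \by^*$ on an interval, the functions $\bF$ and $\bg$ are defined, so $D$ does not vanish there. The chain rule then gives $(\by^*)^{(i)} = \cL_\Sigma^i(\bg)(\bmu^*, \bx^*, (\bu^*)^{(\infty)})$ by induction on $i$, and consequently
\[
P(\bmu^*, \bx^*, \by^*, \bu^*) = Q(\bmu^*, \bx^*, (\bu^*)^{(\infty)}).
\]
The direction ``$\Rightarrow$'' is now immediate: if $Q = 0$, then $P$ vanishes on every analytic solution.

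\textbf{Step 3: the converse.} Suppose $Q \neq 0$, and write $Q = N/R$. Since $\CC$ is infinite, choose a point $(\bmu_0, \bx_0, \bu_0, \bu_1, \ldots, \bu_s)$ at which $N$, $R$ and $D$ are all nonzero; here $s$ bounds the input orders appearing in $Q$. Take the polynomial input
\[
\bu^*(t) = \sum_{j \leqslant s} \bu_j \frac{t^j}{j!},
\]
so that its $j$-th derivative at $t = 0$ equals $\bu_j$. Set $\bmu^* = \bmu_0$. The right-hand side $\bF(\bmu_0, \bx, \bu^*(t))$ is analytic near $(t, \bx) = (0, \bx_0)$ because $D \neq 0$ there. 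By the analytic Cauchy existence theorem, there is an analytic solution $\bx^*$ with $\bx^*(0) = \bx_0$ on a small interval, and we put $\by^* = \bg(\bmu^*, \bx^*, \bu^*)$. Evaluating the identity from Step 2 at $t = 0$ gives
\[
P(\bmu^*, \bx^*, \by^*, \bu^*)(0) = Q(\bmu_0, \bx_0, \bu_0, \ldots, \bu_s) \neq 0,
\]
so $P$ does not vanish on this analytic solution. This proves ``$\Leftarrow$'' by contraposition.

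The main obstacle is the bookkeeping in Steps 1 and 2. One has to check that the derivatives of the analytic output really are the Lie derivatives evaluated along the solution, and that no additional denominators appear when iterating $\cL_\Sigma$. Controlling the denominators by $D$ handles both points. It also ensures that the nonvanishing conditions imposed at $t=0$ in Step 3 are exactly what is needed for the local solution to exist.
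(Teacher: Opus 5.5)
Your proof is correct. For the direction $\Rightarrow$ (your Steps 1--2) it is essentially the paper's argument. The paper also replaces each $\by^{(i)}$ by the iterated Lie derivative $\bg_i$ to form $Q$, and concludes from the algebraic independence of $\bM, \bX, \bU^{(\infty)}$; it only phrases this as a contradiction rather than directly.

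The genuine difference is in $\Leftarrow$. The paper does not construct a solution by hand. It takes the differential subfield of $\CC(\bM,\bX,\bU^{(\infty)})$ generated by $\bM,\bX,\bU^{(\infty)}$ and the coefficients of $P,\bF,\bg$, and embeds it into a field of meromorphic functions by the Seidenberg embedding theorem. The images of $\bM,\bX,\bY,\bU$ then form an analytic solution, and injectivity of the embedding gives $P\neq 0$ on it. You replace this black box by an explicit witness: a point where $NRD\neq 0$, a polynomial input with the prescribed jet at $t=0$, and Cauchy's existence theorem for the initial value problem.

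The paper's route is shorter and needs neither the denominator bookkeeping nor the choice of a point. Yours is elementary and self-contained; in effect it is a hands-on proof of the special case of Seidenberg's theorem for this generic-solution field. It also gives more information: nonvanishing is already witnessed by solutions with polynomial inputs, and with parameters, initial states and input jets taken outside a proper Zariski-closed set.

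One small point concerns your claim that $D$ never vanishes along an analytic solution, which depends on the precise definition of a solution. If $D$ may vanish at isolated points, run your Step 2 on the dense open set where $D\neq 0$ and conclude by continuity; nothing else changes.
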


\begin{proof}
    Assume that $P(\bM, \bX, \bY, \bU) \neq 0$.
    Let $\cF$ be the differential subfield of $\CC(\bM, \bX, \bU^{(\infty)})$ generated by $\bM, \bX, \bU^{(\infty)}$ and the coefficients of $P, \bF, \bg$ over $\mathbb{C}$.
    By Seidenberg embedding theorem~\cite{Seidenberg1958,Pavlov2022}, this field can be embedded into the field of meromorphic function on some domain.
    Since $\bX' = \bF(\bM, \bX, \bU)$ and $\bY = \bg(\bM, \bX, \bU)$, the images of $\bM, \bX, \bY, \bU$ under this embedding constitute an analytic solution of~\eqref{eq:ODEmodel}.
    Since $\cF$ contains $P(\bM, \bX, \bY, \bU) \neq 0$, this solution does not vanish $P$ either.

    Assume that $P(\bM, \bX, \bU, \bY) = 0$ and let $\bmu^\ast, \bx^\ast, \by^\ast, \bu^\ast$ be an analytic solution of~\eqref{eq:ODEmodel} 
    at which $P$ does not vanish.
    By differentiating $\bg(\bM, \bX, \bU)$ and eliminating $\bX'$ using $\bX' = \bF(\bM, \bX, \bU)$, one can show by induction that, for every $i \geqslant 0$, there exists $\bg_i \in \CC(\bmu, \bx, \bu^{(\infty)})$ such that $\bY^{(i)} = \bg_i(\bM, \bX, \bU)$ (where $\bg_0 = \bg$).
    Since every analytic solution satisfies~\eqref{eq:ODEmodel},
    we also have $(\by^*)^{(i)} = \bg_i(\bmu^*, \bx^*, \bu^*)$.
    Let $Q \in \CC(\bmu, \bx, \bu^{(\infty)})$ be the rational function obtained from $P$ by replacing every $\by^{(i)}$ by $\bg_i$.
    On one hand, $Q$ is a nonzero polynomial since it does not vanish at $\mu^\ast, \bx^\ast, \bu^\ast$.
    On the other hand, $Q(\bM, \bX, \bU) = 0$ which contradicts the algebraic independence of $\bM, \bX, \bU^{(\infty)}$.
    The obtained contradiction proves that $P(\bM, \bX, \bY, \bU) = 0$ implies the same equality for any analytic solution of~\eqref{eq:ODEmodel}.  
\end{proof}

\begin{proof}[Proof of~\Cref{lem:generic_sol}]
    Assume that function $h$ is identifiable, and let $P_1, P_2 \in \CC[\by^{(\infty)}, \bu^{(\infty)}]$ be the corresponding differential polynomials from~\Cref{def:identifiability2}.
    Let $h = \frac{H_1}{H_2}$ for coprime $H_1, H_2 \in \CC[\bmu, \bx, \bu^{(\infty)}]$.
    \Cref{lem:field_to_func} implies that $P_2(\bY, \bU) \neq 0$, so we can consider
    \[
    h(\bM, \bX, \bU) - \frac{P_1(\bY, \bU)}{P_2(\bY, \bU)} = \frac{H_1(\bM, \bX, \bU) P_2(\bY, \bU) - H_2(\bM, \bX, \bU) P_1(\bY, \bU)}{H_2(\bM, \bX, \bU) P_2(\bY, \bU)}.
    \]
    Identifiability of $h$ implies that the product of the numerator and denominator of the latter expression vanishes for any analytic solution of~\eqref{eq:ODEmodel}.
    Then, by~\Cref{lem:field_to_func}, it is equal to zero in the differential field.
    Since the denominator is nonzero in the field, the numerator vanishes implying that $h(\bM, \bX, \bU) \in \CC(\bY^{(\infty)}, \bU^{(\infty)})$.

    Assume $h(\bM, \bX, \bU) \in \CC(\bY, \bU)$.
    Then there exist coprime $P_1, P_2 \in \CC[\by^{(\infty)}, \bu^{(\infty)}]$ such that $P_2(\bY, \bU) \neq 0$ and $h(\bM, \bX, \bU) = \frac{P_1(\bY, \bU)}{P_2(\bY, \bU)}$.
    By~\Cref{lem:field_to_func}, the numerator of $h(\bmu, \bx, \bu) - \frac{P_1(\by, \bu)}{P_2(\by, \bu)}$ vanishes on any analytic solution of~\eqref{eq:ODEmodel}.
    This implies identifiability of $h$.
\end{proof}

In this field-theoretic reformulation, \Cref{thm:split_input} follows directly from the following proposition.
The proposition is a general statement about differential fields of special form, so we do not use~\Cref{not:diff_field}.

\begin{proposition}\label{prop:input_separation}
    Let $k$ be a differential field of characteristic zero\footnote{In our applications, we will have $k = \CC{(\bmu)}$} and consider the field $\cF_0$ of differential rational functions $k(\bu^{(\infty)})$.
    Let $\cF_2 = k(\bx, \bu^{(\infty)})$ be a transcendental extension by $\bx$ with the derivation extended by
    \[
        \bx' = \bF(\bx, \bu)
    \]
    for some $\bF$, a tuple of elements from $k(\bx, \bu)$.
    Let $\cF_1$ be any differential field with $\cF_0 \subseteq \cF_1 \subseteq \cF_2$. 
    Let $p \in \cF_1$ be an element, and we write it as a differential rational function in $\bu$, that is,
    \[
      p = \frac{A(\bu)}{B(\bu)},
    \]
    where $A, B \in k(\bx)[\bu^{(\infty)}]$ are coprime and for at least one of $A$ and $B$ at least one of their coefficients is equal to~$1$.
    Then all the coefficients of $A$ and $B$ belong to $\cF_1$.
\end{proposition}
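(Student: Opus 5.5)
The plan is to recast the statement as a linear-algebra problem whose coefficient matrix has entries in $\cF_1$, and then use a Wronskian-type argument to show that its (essentially unique) solution is already defined over $\cF_1$. Put $K := k(\bx)$, so that $\cF_2 = K(\bu^{(\infty)})$ and the $\bu^{(\infty)}$ are algebraically independent over $K$. Let $m_1, \ldots, m_r$ be the monomials in $\bu^{(\infty)}$ occurring in $A$ and $n_1, \ldots, n_s$ those occurring in $B$. Write $A = \sum a_i m_i$ and $B = \sum b_j n_j$ with $a_i, b_j \in K$. The identity $A - pB = 0$ then says that the vector $(a_1, \ldots, a_r, b_1, \ldots, b_s)$ is a $K$-linear relation among
\[
m_1, \ldots, m_r, -p\,n_1, \ldots, -p\,n_s,
\]
and all of these lie in $\cF_1$, because $\cF_1$ contains $\cF_0 = k(\bu^{(\infty)})$ and $p$.

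\emph{Step 1: the relation is unique up to scaling.} Suppose $(a', b')$ is another $K$-relation with the same supports, and put $A' = \sum a'_i m_i$, $B' = \sum b'_j n_j$. Then $A'B = AB'$. Since $A$ and $B$ are coprime in $K[\bu^{(\infty)}]$, $B$ divides $B'$. Both are supported on the same monomials, so $B' = cB$ with $c \in K$, and then $A' = cA$. Hence the space of $K$-relations is one-dimensional. The normalization (some coefficient equal to $1$) then picks out the specific vector $(a, b)$ inside this line.

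\emph{Step 2: a Wronskian argument.} I would find commuting derivations $\delta_1, \delta_2, \ldots$ of $\cF_2$ with three properties: they preserve $\cF_1$; their common field of constants in $\cF_2$ is exactly $K$; and they are enough to separate $K$-linearly independent elements. The natural candidates are the partial derivatives $\partial/\partial u_j^{(i)}$. By the generalized Wronskian lemma (for commuting derivations with constant field $K$), the matrix with rows $\delta^\alpha(m_i), \delta^\alpha(p\, n_j)$, for $|\alpha|$ bounded, has kernel over $\cF_2$ equal to $\cF_2 \otimes_K(\text{kernel over } K)$. By Step 1 this kernel is one-dimensional and spanned by $(a, b)$. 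The matrix has entries in $\cF_1$, so its kernel is spanned by a vector with entries in $\cF_1$. Normalizing that vector at the coordinate where $(a, b)$ has a $1$ recovers $(a, b)$, which gives $a_i, b_j \in \cF_1$.

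\emph{Step 3: stability of $\cF_1$ (the main obstacle).} It remains to show that $\cF_1$ is closed under each $\partial/\partial u_j^{(i)}$. This is exactly where the differential structure is needed: without it the claim is false, for instance $k(t, at+b)$ does not contain $a$. The key observation is that $\bx'$ involves only $\bu$ and not its derivatives. Hence, for $q \in \cF_2$ depending on $\bu$ only up to order $N$, the derivative $q'$ is affine in $\bu^{(N+1)}$ with coefficient vector $\partial q / \partial \bu^{(N)}$. So it suffices to extract the coefficient of a top variable from an element affine in it.

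For this extraction I would argue by induction on $N$ and on the number of input components, using the derivation $'$ itself. Write $q' = \alpha + \beta\, v$ with $v = u_j^{(N+1)}$. Then $q'' = \gamma + \beta\, u_j^{(N+2)}$ and $q''' = \gamma' + \ldots + \beta\, u_j^{(N+3)}$, and the leading coefficient $\beta$ is the same at every stage. The aim is to isolate $\beta$ by eliminating against the known elements $u_j^{(N+2)}, u_j^{(N+3)} \in \cF_1$; a possible route is to apply Step 1 and Step 2 with only the top variables treated as indeterminates and the lower ones absorbed into the coefficient field. I expect most of the technical work of the proof to lie in making this step rigorous.
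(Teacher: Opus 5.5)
\textbf{There is a genuine gap: Step~3 is equivalent to the proposition itself, and your sketch does not prove it.} Steps~1 and~2 are sound. But the proposition implies $\cF_1=(\cF_1\cap k(\bx))(\bu^{(\infty)})$, which is visibly stable under every $\partial/\partial u_j^{(i)}$. So, given Steps~1--2, closure of $\cF_1$ under these derivations is equivalent to what you want to prove, and deferring it defers everything. Two things are missing.

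\emph{Lower-order partials.} Reading off the coefficient of $u_j^{(N+1)}$ in $q'$ can only give the top-order partial $\partial q/\partial u_j^{(N)}$ with $N=\ord_{u_j}q$. Your Wronskian matrix also needs lower-order ones, e.g.\ $\partial(p\,n_j)/\partial u_1$ when $\ord_{u_1}p\geqslant 1$. Nothing in the sketch produces these.

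\emph{Extracting the top coefficient.} ``Eliminating against $u_j^{(N+2)},u_j^{(N+3)},\ldots$'' is not an argument: from $\alpha+\beta v\in\cF_1$ and $v\in\cF_1$ alone you cannot conclude $\beta\in\cF_1$. This is exactly your own example $k(t,at+b)$. What makes it work is that $\cF_1$, as an intermediate field of the finitely generated extension $\cF_0\subseteq\cF_0(\bx)$, is finitely generated: $\cF_1=\cF_0(q_1,\ldots,q_\ell)$. Differentiate until the top variable $v=u_j^{(H+1)}$ lies beyond the orders of $q$ and all $q_i$. Then $v$ is free with respect to $\cF_1$. The paper (\Cref{lem:separant}) exploits this with the automorphism $v\mapsto v+1$, which fixes the $q_i$, maps $\cF_0$ to itself, and hence preserves $\cF_1$; subtracting the two equalities isolates the coefficient.

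\textbf{How to complete your route.} With this ingredient your framework works and gives a proof genuinely different from the paper's. For $M$ larger than all $\ord_{u_j}q_i$, the derivation $\partial/\partial u_j^{(M)}$ kills $q_1,\ldots,q_\ell$ and maps $\cF_0$ into itself, so it preserves $\cF_1$. Because $\bF$ depends on $\bu$ but not on its derivatives, for every $i\geqslant 0$ and $q\in\cF_2$ one has
\[
\frac{\partial q}{\partial u_j^{(i)}}=\frac{\partial (q')}{\partial u_j^{(i+1)}}-\Bigl(\frac{\partial q}{\partial u_j^{(i+1)}}\Bigr)'.
\]
Since $\cF_1$ is closed under $'$, downward induction on $i$ gives closure under all $\partial/\partial u_j^{(i)}$, and Steps~1--2 then finish the proof.

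\textbf{Comparison with the paper.} The paper never needs lower-order partials. It uses only the top-order one, recovers coefficients with respect to the single variable $u_1^{(h_1)}$ by Pad\'e approximation, and obtains only the ratios $B_i/B_0$, $A_i/B_0$. It then inducts on the total order, using a $\QQ$-linear combination coprime to $B_0$ to recover $B_0$ itself. Your repaired route handles all input variables at once over the constant field $k(\bx)$ and needs no normalization juggling. The price is the multi-derivation Wronskian lemma and the commutator identity above.
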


The proof of~\Cref{prop:input_separation} will be based on two lemmas. 
\begin{notation}
  For $\bx = (x_1,\ldots,x_n)$ and a differential polynomial $P$ from $k[\bx^{(\infty)}]$, define~\emph{the order} of $P$ with respect to $x_i$, denoted by $\operatorname{ord}_{x_i} P$, to be the largest $j$ such that $x_i^{(j)}$ appears in $P$.
  If none of the derivatives of $x_i$ appears in $P$, we set $\operatorname{ord}_{x_i} P = -1$.
\end{notation}
\begin{lemma}\label{lem:separant}
    In the notation of~\Cref{prop:input_separation}, assume that $\ord_{u_1}p = h$.
    Then $\frac{\partial}{\partial u_1^{(h)}} (p) \in \cF_1$.
\end{lemma}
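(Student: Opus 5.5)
The plan is to turn the formal partial derivative with respect to a high-order derivative of $u_1$ into an honest field-theoretic operation. I will combine the finiteness of transcendence degree with a separability argument in characteristic zero, applied to the successive derivatives $p, p', p'', \ldots \in \cF_1$.

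\emph{Step 1: the leading structure of $p^{(N)}$.} Write $S := \frac{\partial}{\partial u_1^{(h)}}(p) \in \cF_2$. I will show by induction on $N \geqslant 1$ that
\[
p^{(N)} = S\, u_1^{(h+N)} + R_N, \qquad \ord_{u_1} R_N < h + N,
\]
where $S$ and $R_N$ lie in $\cF_2$. The derivation on $\cF_2$ acts by four kinds of terms: on coefficients from $k$; on $\bx$ via $\bx' = \bF(\bx,\bu)$, which only involves $\bu$ of order $0$; on $u_j^{(i)}$ for $j \neq 1$; and on $u_1^{(i)} \mapsto u_1^{(i+1)}$. Since $\ord_{u_1} p = h$, the only contribution to $p'$ involving $u_1^{(h+1)}$ is $\frac{\partial p}{\partial u_1^{(h)}} u_1^{(h+1)}$, and it is linear. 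The inductive step is the same computation applied to $S u_1^{(h+N)} + R_N$. Here I use that $S$ and $R_N$ have $u_1$-order at most $h + N - 1$, so their derivatives have $u_1$-order at most $h+N$. In particular $\ord_{u_1} p^{(j)} = h + j$ whenever $S \neq 0$; if $S = 0$ there is nothing to prove.

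\emph{Step 2: an algebraic relation.} Since $\cF_2$ is generated over $\cF_0$ by the $n$ elements $\bx$, we have $\operatorname{trdeg}(\cF_2/\cF_0) \leqslant n$. Hence the elements $p, p', \ldots, p^{(n)}$ are algebraically dependent over $\cF_0$. Consequently there is a smallest $N \geqslant 1$ such that $p^{(N)}$ is algebraic over $E := \cF_0(p, p', \ldots, p^{(N-1)})$ (if $p$ itself is algebraic over $\cF_0$, one takes $N=1$, and then $p'$ is algebraic too). Let $Q(T) \in E[T]$ be the minimal polynomial of $p^{(N)}$ over $E$. Because the characteristic is zero, $Q$ is separable, so $Q_T(p^{(N)}) \neq 0$. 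All of $E$ and $p^{(N)}$ lie in $\cF_1$, since $\cF_1$ is a differential field containing $\cF_0$ and $p$.

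\emph{Step 3: applying a partial derivative.} Let $D := \frac{\partial}{\partial u_1^{(h+N)}}$. This is a derivation of the purely transcendental field $\cF_2 = k(\bx, \bu^{(\infty)})$ over $k(\bx, \text{other variables})$, and I check three of its properties.
\begin{itemize}
\item $D$ maps $\cF_0 = k(\bu^{(\infty)})$ into itself and vanishes on $k$.
\item By Step 1, $D(p^{(j)}) = 0$ for $j < N$, since these elements have $u_1$-order less than $h+N$; hence $D(E) \subseteq E$.
\item $D(p^{(N)}) = S$.
\end{itemize}
Applying $D$ to $Q(p^{(N)}) = 0$ gives
\[
Q^{D}(p^{(N)}) + Q_T(p^{(N)})\, S = 0,
\]
where $Q^D$ is obtained by applying $D$ to the coefficients of $Q$, so $Q^D \in E[T]$. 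Therefore $S = -Q^D(p^{(N)})/Q_T(p^{(N)}) \in E(p^{(N)}) \subseteq \cF_1$.

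The main obstacle I expect is Step 1: the bookkeeping must show that $p^{(N)}$ depends on $u_1^{(h+N)}$ exactly linearly with coefficient $S$, and that no lower $p^{(j)}$ involves $u_1^{(h+N)}$. This relies on $\bF$ not depending on derivatives of $\bu$, which is the point where the hypothesis $\bx' = \bF(\bx,\bu)$ is used. Once that is established, Steps 2 and 3 are a standard separability argument.
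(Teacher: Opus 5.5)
Your proof is correct, but it takes a different route from the paper's.

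\textbf{The paper's argument.} It first uses the fact that an intermediate field of a finitely generated extension is finitely generated, and writes $\cF_1 = \cF_0(q_1,\ldots,q_\ell)$. It then differentiates $p$ until the new top variable $u_1^{(H+1)}$ exceeds the $u_1$-orders of $p$ and of every $q_i$. Writing $p^{(H-h+1)} = F(q_1,\ldots,q_\ell)$ with coefficients of $F$ in $\cF_0$, it applies the field automorphism $u_1^{(H+1)} \mapsto u_1^{(H+1)}+1$. Only the $\cF_0$-coefficients of $F$ change, so subtracting the two identities gives $\frac{\partial p}{\partial u_1^{(h)}} = \widetilde{F}(q_1,\ldots,q_\ell) - F(q_1,\ldots,q_\ell) \in \cF_1$.

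\textbf{How your argument differs.} You use the infinitesimal version of this finite-difference trick. Instead of unknown generators of $\cF_1$, you work with $E = \cF_0(p,\ldots,p^{(N-1)})$, whose generators have controlled $u_1$-order by your Step~1. Because $p^{(N)}$ is only algebraic over $E$, not rational, a substitution would not suffice. You therefore need the derivation $\partial/\partial u_1^{(h+N)}$, which preserves $E$, together with separability of the minimal polynomial in characteristic zero.

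\textbf{What each approach buys.} Your version avoids the finite-generation theorem for intermediate fields. It also proves a stronger statement that does not depend on $\cF_1$: $\frac{\partial p}{\partial u_1^{(h)}} \in \cF_0(p, p', \ldots, p^{(N)})$ with $N \leqslant \max(n,1)$. So the separant lies in the differential field generated by $p$ over $\cF_0$, with an explicit bound on the number of derivatives needed. The paper's version is shorter once finite generation is granted, and it works purely with rational identities and a substitution, with no minimal polynomials or separability.
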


\begin{proof}
    Since $\cF_1$ is a subfield of a finitely generated extension of $\cF_0$, there exist $q_1, \ldots, q_\ell$ such that $\cF_1 = \cF_0(q_1, \ldots, q_\ell)$.
    Let $H$ be the maximum of the orders of $p, q_1, \ldots, q_\ell$ with respect to $u_1$.
    Since $\cF_1$ is a differential field, $p^{(H - h + 1)} \in \cF_1$.
    We have
    \[
      p' = u_1^{(h + 1)} \underbrace{\frac{\partial}{\partial u_1^{(h)}} (p)}_{\ord_{u_1} \leqslant h} + \underbrace{Q_1}_{\ord_{u_1} \leqslant h} \quad \implies \quad p^{(H - h + 1)} = u_1^{(H + 1)} \underbrace{\frac{\partial}{\partial u_1^{(h)}} (p)}_{\ord_{u_1} \leqslant H} + \underbrace{Q_2}_{\ord_{u_1} \leqslant H}.
    \]
    We write $p^{(H - h + 1)} = F(q_1, \ldots, q_\ell)$, where $F \in \cF_0(q_1, \ldots, q_\ell)$.
    Then we take an equality
    \begin{equation}\label{eq:before_shift}
      u_1^{(H + 1)} \frac{\partial}{\partial u_1^{(h)}} (p) + Q_2 = F(q_1, \ldots, q_\ell)
    \end{equation}
    and, considering it as an equality of rational functions in $\bu^{(\infty)}$ over $k(\bx)$, perform a substitution $u_1^{(H + 1)} \to u_1^{(H + 1)} + 1$.
    Since the orders of $Q_{2}, q_1, \ldots, q_\ell$ in $u_1$ do not exceed $H$, they will not be affected by the substitution, so we have
    \begin{equation}\label{eq:after_shift}
      (u_1^{(H + 1)} + 1) \frac{\partial}{\partial u_1^{(h)}} (p) + Q_{2} = \widetilde{F}(q_1, \ldots, q_\ell),
    \end{equation}
    where $\widetilde{F}$ is a result of applying the substitution to $F$.
    Subtracting~\eqref{eq:before_shift} from~\eqref{eq:after_shift}, we obtain
    \[
      \frac{\partial}{\partial u_1^{(h)}} (p) = \widetilde{F}(q_1, \ldots, q_\ell) - F(q_1, \ldots, q_\ell) \in \cF_1.\qedhere
    \]
\end{proof}

\begin{lemma}\label{lem:univariate}
    Let $\cK$ be a field of characteristic zero and $\cK(t)$ be the field of rational functions over $\cK$ equipped with the standard derivation $\frac{\partial}{\partial t}$.
    Let $p = \frac{a(t)}{b(t)} \in K(t)$ be a rational function such that $a(t)$ and $b(t)$ are coprime and $b(t)$ is monic.
    Then the field $\QQ(t, p^{(\infty)})$ contains all the coefficients of $a(t)$ and $b(t)$.
\end{lemma}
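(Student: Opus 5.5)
The plan is to show that the field $L := \QQ(t, p, p', p'', \ldots) \subseteq \cK(t)$ contains every coefficient. Write $a = \sum_{i=0}^{m} a_i t^i$ and $b = t^d + \sum_{j<d} b_j t^j$. Derivatives are taken with respect to $t$, and the coefficients are constants. The strategy is to treat the unknown coefficients as unknowns in a linear system over $L$ and show that the system has a unique solution. A linear system with coefficients in $L$ that has a unique solution has that solution in $L$, so the coefficients lie in $L$.

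\textbf{Step 1 (the linear system).} The basic relation is $a - bp = 0$. Differentiating it $r$ times and using $a_i' = b_j' = 0$ gives
\[
\sum_{i} a_i\,(t^i)^{(r)} - \sum_{j<d} b_j\,(t^j p)^{(r)} = (t^d p)^{(r)}, \qquad r = 0,1,\ldots,m+d.
\]
Each $(t^j p)^{(r)}$ is a $\QQ[t]$-combination of $p, p', \ldots, p^{(r)}$, so it lies in $L$. Likewise each $(t^i)^{(r)}$ lies in $\QQ[t] \subseteq L$. This gives $N = m+d+1$ linear equations in the $N$ unknowns $a_0,\ldots,a_m, b_0,\ldots,b_{d-1}$, with all coefficients and right-hand sides in $L$. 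The tuple of true coefficients is one solution of this system.

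\textbf{Step 2 (uniqueness, the main step).} It remains to show the system has no other solution, even over $\cK(t)$. Suppose $(\alpha_i, \beta_j)$ solves the homogeneous version with entries in $\cK(t)$. This alone does not give much, because the entries need not be constants. The right move is to interpret uniqueness as a Wronskian-type statement.

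The homogeneous system says that the vectors $\big((t^i)^{(r)}\big)_r$ and $\big((t^j p)^{(r)}\big)_r$ for $r \leq N-1$ are linearly dependent over $\cK(t)$. Concretely, the Wronskian of the $N$ functions $1, t, \ldots, t^m, p, tp, \ldots, t^{d-1}p$ must vanish. In characteristic zero, a Wronskian vanishes exactly when the functions are linearly dependent over the constants. Here the constants of $\cK(t)$ are $\cK$.

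So it suffices to show that no nonzero $\alpha \in \cK[t]_{\le m}$ and $\beta \in \cK[t]_{<d}$ satisfy $\alpha = \beta p$. Suppose they did. Then $\alpha b = \beta a$. Since $\gcd(a,b)=1$, $b$ divides $\beta$. But $\deg \beta < d = \deg b$, so $\beta = 0$, and hence $\alpha = 0$. The Wronskian is therefore nonzero, the coefficient matrix is invertible over $L$, and the unique solution, which is the true coefficient tuple, lies in $L$.

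\textbf{Edge cases.} The degenerate cases $d=0$ (so $b=1$) or $a=0$ fit the same argument: the set of functions reduces to the monomials, plus $p$-terms when $d>0$.

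The only delicate point is matching the count of equations to the count of unknowns so that the Wronskian argument applies. This uses the monic normalization of $b$: its leading coefficient is fixed at $1$, so it is not an unknown, and $t^d p$ moves to the right-hand side as the inhomogeneous term.
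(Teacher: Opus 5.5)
Your proof is correct. It differs from the paper's mainly in how you justify that the linear system has a unique solution.

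The paper adjoins a new indeterminate $T$ and treats $p(t), p'(t), \ldots$ as the Taylor coefficients of $p(T)$ at $T=t$. It then cites the theory of Pad\'e approximation, i.e.\ rational reconstruction via the extended Euclidean algorithm. That theory shows $a(T)/b(T)$ is recovered from the expansion modulo $(T-t)^{\deg a+\deg b+1}$ by computations inside $\QQ(t, p, \ldots, p^{(\deg a+\deg b)})$.

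Your system obtained by differentiating $a - bp = 0$ is exactly this Pad\'e condition, $a(T) \equiv b(T)\,p(T) \bmod (T-t)^{m+d+1}$. You write it in the monomial basis of $T$ rather than of $T-t$, and you use the same derivatives $p, \ldots, p^{(m+d)}$.

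The difference is the uniqueness step. Instead of citing uniqueness of Pad\'e approximants, you prove nonsingularity directly:
\begin{itemize}
\item the coefficient matrix is the Wronskian matrix of $1, \ldots, t^m, p, \ldots, t^{d-1}p$;
\item the constants of $\cK(t)$ are $\cK$;
\item a $\cK$-linear dependence $\alpha = \beta p$ is excluded by coprimality together with $\deg\beta < \deg b$;
\item Cramer's rule then places the unique solution in $L$.
\end{itemize}
Your route is self-contained and yields explicit determinantal formulas for the coefficients. The paper's route is shorter because it relies on a citation, and it also points to an efficient procedure (the extended Euclidean algorithm) for actually computing the coefficients.
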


\begin{proof}
    We add an extra transcendental indeterminate $T$.
    Now we will view $p(t), p'(t), p''(t), \ldots$ as evaluations of the rational function $p(T)$ and its derivatives at point $T = t$.
    Let $d := \deg a + \deg b$. 
    Then the properties of the Pad\'e approximation~\cite[Corollary 5.21]{MCA} imply that the coefficients of $p(T)$ can be computed using the extended Euclidean algorithm in the field $\QQ(t, p(t), \ldots, p^{(d)}(t))$ and thus belong to this field.
\end{proof}

\begin{proof}[Proof of \Cref{prop:input_separation}]
    Let $\bu = (u_1, \ldots, u_s)$.
    For every $1 \leqslant i \leqslant s$, we denote by $h_i$ the order of $p$ w.r.t. $u_i$.
    We will prove the proposition by induction in $H \coloneqq h_1 + \ldots + h_s$.
    If $H = -s$, $p$ does not depend on $\bu$ and the statement of the proposition is true.
    Assume that $H > -s$ and consider any $i$ with $h_i \geqslant 0$. 
    Without loss of generality, we will take $i = 1$.

    Replacing $\frac{A}{B}$ with $\frac{B}{A}$ if necessary,
    we will assume that one of the coefficients of $B$ with respect to $\bu^{(\infty)}$ is equal to one.
    Let $B_0$ be a coefficient of $B$ as a polynomial in $u_1^{(h_1)}$ such that one of the coefficients of $B_0$ with respect to $\bu^{(\infty)}$ is equal to one.
    We denote the remaining coefficients of $B$ in $u^{(h_1)}$ by $B_1, \ldots, B_{\ell_B}$.
    The coefficients of $A$ in $u^{(h_1)}$ will be denoted $A_0, \ldots, A_{\ell_A}$.
    By applying~\Cref{lem:separant} to $p$ repeatedly, we find that $\frac{\partial^i}{\partial (u_1^{(h_1)})^i}(p) \in \cF_1$ for every $i \geqslant 0$.
    We apply~\Cref{lem:univariate} with $p = p$, $t = u_1^{(h_{1})}$, and $\cK = k(\bx, u_1^{(< h_1)}, u_2^{(\infty)}, \ldots, u_s^{(\infty)})$ and obtain that $\frac{B_1}{B_0}, \ldots, \frac{B_{\ell_B}}{B_0}, \frac{A_0}{B_0}, \ldots, \frac{A_{\ell_A}}{B_0} \in \cF_1$.
    
    Coprimality of $A$ and $B$ as polynomials in $\bu^{(\infty)}$ implies that $B_0, \ldots, B_{\ell_B}, A_0, \ldots, A_{\ell_A}$ are coprime as well.
    This implies that there exists a $\mathbb{Q}$-linear combination $C$ of $B_1, \ldots, B_{\ell_B}, A_0, \ldots, A_{\ell_A}$ coprime with $B_0$.
    Since $\ord_{u_1} C, \ord_{u_1} B_0 < h$, we can apply the induction hypothesis to $\frac{C}{B_0}$ and conclude that the coefficients of $B_0$ and $C$ belong to $\cF_1$, so $B_0 \in \cF_1$. 
    Therefore, $B_0, \ldots, B_{\ell_B}, A_0, \ldots, A_{\ell_A} \in \cF_1$ and, applying the induction hypothesis again, we conclude that their coefficients as polynomials in $\bu^{(\infty)}$, which coincide with the coefficients of $A$ and $B$ in $\bu^{(\infty)}$, belong to $\cF_1$.
\end{proof}

\begin{proof}[Proof of~\Cref{thm:split_input}]
    The theorem follows from~\Cref{prop:input_separation} applied to $\cF_0 = \CC(\bM, \bU^{(\infty)}), \cF_1 = \CC(\bM, \bY^{(\infty)}, \bU^{(\infty)}),$ and $\cF_2 = \CC(\bM, \bX, \bU^{(\infty)})$ and $p = H$.
\end{proof}


\subsection{Generating
all observable functions}

The goal of this section is to prove~\Cref{theorem:complete}. 
For a system~$\Sigma$ as defined in~\eqref{eq:ODEmodel}, we introduce \emph{the Lie derivative}: for $h \in k(\bmu, \bx, \bu^{(\infty)})$, we define it as
\begin{equation}\label{eq:lie}
    \cL_\Sigma(h) := \sum\limits_{i = 1}^n f_i \frac{\partial}{\partial x_i}(h) + \sum\limits_{j = 1}^\ell\sum\limits_{k = 0}^\infty u_j^{(k + 1)} \frac{\partial }{\partial u_j^{(k)}} (h).
\end{equation}

\begin{notation}
Let $\bp = (p_1,\ldots,p_m)$ with $p_1,\ldots,p_m \in \mathbb{C}(\bmu, \bx, \bu^{(\infty)})$. Let $\bh = (h_1,\ldots,h_m)$ with $h_1,\ldots,h_m \in \ZZpos$.
\begin{itemize}
    \item We denote $\cL_\Sigma^{< \bh}(\bp) = (p_1,\cL_\Sigma(p_1),\ldots,\cL_\Sigma^{h_1-1}(p_1),\ldots,p_m,\cL_\Sigma(p_m),\ldots,\cL_\Sigma^{h_m-1}(p_m))$.
    \item For every $i\in\mathbb{N}$, we denote $\bh + i = (h_1+i,\ldots,h_m+i)$. We define $\cL_\Sigma^{\leqslant \bh}(\bp) = \cL_\Sigma^{< \bh + 1}(\bp)$.
\end{itemize}
\end{notation}

\begin{theorem}\label{theorem:complete}
Consider the system $\Sigma$ as in~\eqref{eq:ODEmodel} with the outputs $\by = (y_1,\ldots,y_m)$ with $y_i = g_i(\bmu,\bx,\bu)$ for $g_i \in \mathbb{C}(\bmu,\bx,\bu)$ for $i=1,\ldots,m$. 
Let $\cE \subseteq \CC(\bmu, \bx)$ be a subfield of the observation field 
closed under taking Lie derivative.
We write $\cE = \CC(\bc)$ with $\bc = (c_1,\ldots,c_p) \in \CC(\bmu, \bx)^p$. 

Let $h_1,\ldots,h_m \in \ZZpos$ be such that for every $i=1,\ldots,m$ 
the variables $\by, \bu$ satisfy a differential equation over $\cE$ of order $h_i+1$ in $y_i$ and of order at most $h_j$ in $y_j$ for $j\neq i$ for $j=1,\ldots,m$. 
We denote $\bh := (h_1,\ldots,h_m)$.
Then $h \in \CC(\bmu,\bx,\bu^{(\infty)})$ is observable if and only if
\[
h \in  \CC(\bc, \cL_\Sigma^{\leqslant \bh+1}(\bg),\bu^{(\infty)}).
\]
\end{theorem}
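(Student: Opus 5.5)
The plan is to pass to the differential field of the system via \Cref{lem:generic_sol}. There, observability of $h$ means $h(\bM,\bX,\bU) \in \CC(\bY^{(\infty)},\bU^{(\infty)})$. Evaluated at the generic solution, the Lie derivatives become derivatives of the outputs: $\cL_\Sigma^j(g_i)(\bM,\bX,\bU) = Y_i^{(j)}$. Hence the field in the statement becomes
\[
K := \CC\bigl(\bc, Y_1^{(\leqslant h_1+1)}, \ldots, Y_m^{(\leqslant h_m+1)}, \bU^{(\infty)}\bigr),
\]
where $\bc$ is evaluated at $(\bM,\bX)$. The statement then reduces to the field equality $K = \CC(\bY^{(\infty)},\bU^{(\infty)})$.

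The inclusion $K \subseteq \CC(\bY^{(\infty)},\bU^{(\infty)})$ (the ``if'' direction) is immediate. Each $c_j$ lies in $\cE$, which is contained in the observation field, so $c_j \in \CC(\bY^{(\infty)},\bU^{(\infty)})$. The remaining generators of $K$ are derivatives of $\bY$ and $\bU$.

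For the converse, note that $\CC(\bY^{(\infty)},\bU^{(\infty)}) \subseteq \CC(\bc)(\bY^{(\infty)},\bU^{(\infty)})$. It therefore suffices to show that $K$ is closed under the derivation, since then it contains every $Y_i^{(j)}$. We check the generators one at a time.
\begin{itemize}
\item Closure under the derivation for $\bU^{(\infty)}$ is trivial.
\item For $\bc$: $\cE$ is closed under the Lie derivative, so $c_j' = \cL_\Sigma(c_j) \in \cE = \CC(\bc) \subseteq K$. Consequently every derivative of an element of $\cE$ stays in $\cE$.
\item For $j \neq i$ the derivative $(Y_j^{(\leqslant h_j)})'$ is again among the generators. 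So the only new elements are $Y_i^{(h_i+2)}$ for $i = 1, \ldots, m$.
\end{itemize}

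To handle $Y_i^{(h_i+2)}$, take the given differential equation $P_i$ with coefficients in $\cE$. It has order $h_i+1$ in $y_i$ and order at most $h_j$ in $y_j$ for $j \neq i$, and it vanishes at $(\bY,\bU)$. Among all such equations, choose $P_i$ of minimal degree in $y_i^{(h_i+1)}$. Differentiating $P_i(\bY,\bU) = 0$ gives
\[
S_i \, Y_i^{(h_i+2)} + R_i = 0, \qquad S_i := \frac{\partial P_i}{\partial y_i^{(h_i+1)}}(\bY,\bU).
\]
Here $R_i$ involves only $Y_i^{(\leqslant h_i+1)}$, the $Y_j^{(\leqslant h_j+1)}$, $\bU^{(\infty)}$, and the coefficients of $P_i$ together with their derivatives. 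All of these lie in $K$ by the previous paragraph. If $S_i \neq 0$, then $Y_i^{(h_i+2)} = -R_i/S_i \in K$. By induction on the derivative order, $K$ is then a differential field and contains all of $\bY^{(\infty)}$.

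The main obstacle is showing that the separant $S_i$ does not vanish at the generic solution. I would argue as follows. Since $P_i$ genuinely involves $y_i^{(h_i+1)}$ and we are in characteristic zero, $\partial P_i/\partial y_i^{(h_i+1)}$ is a nonzero polynomial of strictly smaller degree in $y_i^{(h_i+1)}$. Its coefficients still lie in $\cE$, and its orders in the variables are no larger than those of $P_i$. If $S_i$ vanished at $(\bY,\bU)$, it would be an equation of the same allowed shape but of lower degree, contradicting minimality. There is one subtlety: the separant might no longer involve $y_i^{(h_i+1)}$ at all. In that case it is an equation of order $\leqslant h_i$ in $y_i$, and I would handle this by choosing $P_i$ minimal with respect to the order in $y_i$ first, then the degree. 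If that causes trouble, one can replace $h_i$ by a smaller value, which only shrinks the set of generators, so proving the statement for smaller $h_i$ implies it for the given one. A nonzero constant separant causes no problem.
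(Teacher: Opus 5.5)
Your reduction to the generic solution, and then to showing that $K$ is closed under the derivation, follows the same skeleton as the paper's proof: \Cref{lemma:one-after-another} shows that the fields generated with orders $\bh+1,\bh+2,\ldots$ all coincide. Your separant argument is also correct whenever the degree-minimal $P_i$ has degree at least two in $y_i^{(h_i+1)}$.

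The gap is the linear case, and it is not marginal. Suppose $(\bY,\bU)$ satisfies some nonzero differential polynomial $A$ over $\cE$ of order at most $h_j$ in every $y_j$. Then $y_i^{(h_i+1)}A$ is an admissible equation of degree one, your minimal $P_i$ may be of this form, and its separant $A$ vanishes at $(\bY,\bU)$. Neither of your remedies repairs this.
\begin{itemize}
\item Choosing minimal order in $y_i$ first: the order of $P_i$ in $y_i$ is prescribed. If you let it drop, a relation of minimal order need not involve $y_i$ at all, and then it says nothing about $Y_i^{(h_i+2)}$.
\item Decrementing: you must choose an index (not necessarily $i$) and re-establish the hypothesis for every output. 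For instance, the equations for $j\neq i$ must then have order at most $h_i-1$ in $y_i$. A vanishing separant provides none of this.
\end{itemize}
In fact, if such degenerate equations are admitted, the statement is false. Take $x_1'=x_2$, $x_2'=x_3$, $x_3'=x_4'=0$, $y_1=x_1$, $y_2=x_4$, $\cE=\CC$ and $\bh=(0,1)$. The equations $y_1'y_2'=0$ and $y_2''=0$ have the required shapes. Yet $\CC(\cL_\Sigma^{\leqslant\bh+1}(\bg))=\CC(x_1,x_2,x_4)$ does not contain the observable state $x_3=y_1''$.

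The missing idea is to use the hypothesis in the nondegenerate form in which the paper's proof uses it, namely the assumption of \Cref{lemma:one-after-another}. Set $\mathcal{K}_{\bh}:=\cE(Y_1^{(\leqslant h_1)},\ldots,Y_m^{(\leqslant h_m)},\bU^{(\infty)})$, with $\cE$ evaluated at $(\bM,\bX)$. The hypothesis should say that $Y_i^{(h_i+1)}$ is algebraic over this \emph{field}. Then work with the minimal polynomial $Q\in\mathcal{K}_{\bh}[T]$ of $Y_i^{(h_i+1)}$, rather than with a degree-minimal differential polynomial over $\cE$. In characteristic zero, $\partial Q/\partial T$ is a nonzero polynomial of smaller degree, so it does not vanish at the root. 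Differentiating $Q(Y_i^{(h_i+1)})=0$ sends the coefficients into $\mathcal{K}_{\bh+1}$, because $\cL_\Sigma(\cE)\subseteq\cE$. This yields $Y_i^{(h_i+2)}\in\mathcal{K}_{\bh+1}=K$.

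In your own language, the fix is to minimize the degree only over equations whose coefficients with respect to $y_i^{(h_i+1)}$ do not all vanish at $(\bY,\bU)$. A minimal such equation has leading coefficient nonzero at $(\bY,\bU)$, so its separant cannot vanish, linear case included, and no decrementing is needed. Your argument as written is already complete when $\bh$ is a parametric profile in the sense of \Cref{def:param-prof}, since then no nonzero relation of orders at most $\bh$ exists. The paper's preliminary reduction to a parametric profile $\bh^*\leqslant\bh$ is the rigorous version of your decrementing idea, and it too relies on the algebraic form of the hypothesis.
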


The following definition
can be seen as a generalization of observability indices of the system~\cite[\S 2.1 and \S 4.6]{Moog}. 

\begin{definition}[{Parametric profile, cf.~\cite[Definition 2.8]{Dong2023}}]\label{def:param-prof}
For a system $\Sigma$ as in~\eqref{eq:ODEmodel} and a subfield $\cE \subseteq \CC(\bmu,\bx)$ such that $\mathcal{L}_{\Sigma}(\cE) \subseteq \cE$, a tuple $\bh = (h_1,\ldots,h_m) \in (\ZZpos)^m$ is called a \emph{parametric profile} for $\Sigma$ and $\cE$ if \begin{itemize}
    \item 
    $\by, \bu$ do not satisfy any nontrivial differential equation over $\cE$ of order at most $h_i$ in $y_i$ for every $i = 1, \ldots, m$, and
    \item for every $i=1,\ldots,m$, there exists a differential equation $Q_i$ over $\cE$ satisfied by 
    $\by, \bu$ such that $\ord_{y_i}Q_i = h_i + 1$ and $\ord_{y_j}Q_i \leqslant h_j$ for every $j \neq i$, $j = 1, \ldots, m$.
\end{itemize}
\end{definition}

\begin{example}[Dependence on the ground field $\cE$]\label{ex:paramprof}
Consider the model $x'(t) = \mu_1 x(t)$ and $y(t) = x(t)$. Naturally, $y(t)$ satisfies the differential equation $y' - \mu_1 y = 0$ over $\cE = \CC(\mu_1)$, hence $h = 0$ is a parametric profile for $\cE = \CC(\mu_1)$.
However, if we take $\cE = \CC$ (that is, prohibit $\mu_1$ to appear in the coefficients), then $y y'' - (y')^2 = 0$ is the equation of the smallest order satisfied by $y(t)$, and the parametric profile is $h=1$.
\end{example}

\begin{remark}
\Cref{theorem:complete} gives an upper bound on the required orders of Lie derivatives. 
There may exist a tuple $\bh^* \leqslant \bh + 1$, with $h^*_i < h_i+1$ for some $i = 1,\ldots,m$, such that the statement still holds with $\bh+1$ replaced by $\bh^*$. 
However, if additionally $\bh$ is a parametric profile for $\cE$, for any such $\bh^*$, definition of parametric profile implies (see also
\cite[Definition~2.8]{Dong2023}) that $\bh\leqslant\bh^*\leqslant\bh+1$.
\end{remark}

\begin{lemma}\label{lemma:one-after-another}
    Let $k$ be a field of characteristic zero.
    Let $k \subseteq \cE \subseteq \cF \subseteq k(\bmu, \bx, \bu^{(\infty)})$ be field extensions, such that $\cL_{\Sigma}(\cE) \subseteq \cE$ and $\cF = k(\cL_{\Sigma}^{(\infty)}(\bg), \bu^{(\infty)})$ for $\bg = (g_1,\ldots,g_m)$ with $g_1,\ldots,g_m\in k(\bmu, \bx, \bu)$.
    Let $\bh = (h_1,\ldots,h_m) \in (\ZZpos)^m$ be such that for every $i=1,\ldots,m$,
    $\cL_{\Sigma}^{h_i+1}(g_i)$ is algebraic over 
    $
    \cE(\cL_\Sigma^{\leqslant\bh}(\bg), \bu^{(\infty)})
    $.
    Then
    \[
    \cF = \cE(\cL_\Sigma^{\leqslant 
    \bh+1
    }(\bg), \bu^{(\infty)}).
    \]
\end{lemma}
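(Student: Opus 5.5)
The plan is to prove the two inclusions separately. The inclusion $\cE(\cL_\Sigma^{\leqslant \bh+1}(\bg), \bu^{(\infty)}) \subseteq \cF$ is immediate: $\cE \subseteq \cF$ by assumption, and every $\cL_\Sigma^{j}(g_i)$ and every $u_l^{(k)}$ is among the generators of $\cF$. For the reverse inclusion, set
\[
K := \cE(\cL_\Sigma^{\leqslant \bh+1}(\bg), \bu^{(\infty)}),\qquad K_0 := \cE(\cL_\Sigma^{\leqslant \bh}(\bg), \bu^{(\infty)}) \subseteq K .
\]
Since $K$ already contains $k$, $\bg$ and $\bu^{(\infty)}$, it suffices to show that $K$ is closed under $\cL_\Sigma$. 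Then induction on $j$ gives $\cL_\Sigma^{j}(g_i) \in K$ for all $i$ and $j$, and hence $\cF \subseteq K$.

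To show $\cL_\Sigma(K) \subseteq K$, I will use that $\cL_\Sigma$ is a derivation on $k(\bmu, \bx, \bu^{(\infty)})$. By the Leibniz and quotient rules, it is therefore enough to check that $\cL_\Sigma$ maps each generator of $K$ into $K$. The generators fall into four groups:
\begin{itemize}
\item elements of $\cE$, which are mapped into $\cE$ by hypothesis;
\item $u_l^{(k)}$, which is mapped to $u_l^{(k+1)} \in K$;
\item $\cL_\Sigma^{j}(g_i)$ with $j \leqslant h_i$, which is mapped to $\cL_\Sigma^{j+1}(g_i)$ with $j+1 \leqslant h_i+1$, so it lies in $K$;
\item the top-order elements $a_i := \cL_\Sigma^{h_i+1}(g_i)$, for which we must show $\cL_\Sigma(a_i) \in K$.
\end{itemize}
The same reasoning applied to the generators of $K_0$ gives $\cL_\Sigma(K_0) \subseteq K$. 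This is the key bookkeeping fact: the generators of $K_0$ are sent into $K$, though not necessarily into $K_0$.

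The main step is to show $\cL_\Sigma(a_i) \in K$. By hypothesis, $a_i$ is algebraic over $K_0$; let $P(T) = \sum_r c_r T^r \in K_0[T]$ be its minimal polynomial. Applying $\cL_\Sigma$ to $P(a_i) = 0$ gives
\[
0 = \sum_r \cL_\Sigma(c_r)\, a_i^r \;+\; P'(a_i)\,\cL_\Sigma(a_i).
\]
Since $k$ has characteristic zero, $P'$ is a nonzero polynomial of smaller degree than $P$, so minimality of $P$ gives $P'(a_i) \neq 0$. Hence
\[
\cL_\Sigma(a_i) = -\frac{\sum_r \cL_\Sigma(c_r)\, a_i^r}{P'(a_i)}.
\]
Each $\cL_\Sigma(c_r)$ lies in $K$ because $c_r \in K_0$ and $\cL_\Sigma(K_0) \subseteq K$; also $a_i \in K$ and the coefficients of $P'$ lie in $K_0 \subseteq K$. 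Therefore $\cL_\Sigma(a_i) \in K$.

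I expect this minimal-polynomial differentiation to be the only real obstacle. The point that needs care is that the coefficients $c_r$ lie in $K_0$, whose image under $\cL_\Sigma$ may leave $K_0$ but stays inside $K$. This is exactly why the argument treats $K_0$ and $K$ separately rather than trying to show that $K_0$ itself is closed under $\cL_\Sigma$.
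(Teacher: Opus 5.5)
Your proof is correct and uses the same key step as the paper's: you differentiate the minimal polynomial of $\cL_\Sigma^{h_i+1}(g_i)$ over $\cE(\cL_\Sigma^{\leqslant \bh}(\bg), \bu^{(\infty)})$ and use characteristic zero to divide by $P'(a_i)\neq 0$, with the derivatives of the coefficients landing one level higher. The differences are organizational. You differentiate once and conclude because the elements whose derivative stays in $K$ form a subfield, so $K$ is $\cL_\Sigma$-closed; the paper instead differentiates $H$ times to get $\cL_\Sigma^{h_i+1+H}(g_i)\in\cE(\cL_\Sigma^{\leqslant \bh+H}(\bg),\bu^{(\infty)})$ for every $H$. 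You also skip the paper's preliminary reduction to a parametric profile, which is fine, since a minimal polynomial always has positive degree.
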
 

\begin{proof}
    By decrementing some $h_i$'s if necessary, 
    it follows from \cite[Ch.~8, \S 1, Theorem 1.1]{lang} that there exists a parametric profile $\bh^* = (h_1^*,\ldots,h_m^*)$ for $\Sigma$ and $\cE$ such that $h_i^* \leqslant h_i$ for $i=1,\ldots,m$.
    If the statement of the lemma holds in the case $\bh = \bh^*$, then it holds in all cases.
    Thus it suffices to prove the lemma in the case $\bh = \bh^*$. Assume $\bh = \bh^*$.

    Let $\mathcal{K}_{
    {\bh}} = \cE(\cL_\Sigma^{
    \leqslant \bh
    }(\bg), \bu^{(\infty)})$.
    It suffices to show that
    \[
    \mathcal{K}_{{\bh+1}} = \mathcal{K}_{\bh+2} = \mathcal{K}_{{\bh+3}} = \ldots.
    \]
    By induction, it suffices to show that $\mathcal{K}_{{\bh+1}} = \mathcal{K}_{\bh+2}$.

    Let $i \in \{1,\ldots,m\}$ be arbitrary. Per hypothesis, $\cL^{
    {h_i+1}}_\Sigma(g_i)$ is algebraic over $\mathcal{K}_{
    {\bh}}$. Choose $Q \in \mathcal{K}_{
    {\bh}}[T]$ of minimal degree in $T$ (and this degree is not zero due to $\bh = \bh^*$) such that
    \[
    Q(\cL_\Sigma^{
    {h_i+1}}(g_i)) = 0.
    \]
    Let $H>0$ be arbitrary. By $H$ applications of $\cL_\Sigma$ to this equality and using the chain rule, we obtain
    \[
    \cL^{
    {h_i+1+H}}_\Sigma(g_i) \left( \frac{\partial}{\partial T} Q \right)(\cL_\Sigma^{
    {h_i+1}}(g_i)) + R(\cL_\Sigma^{
    {h_i+1}}(g_i),\ldots,\cL_\Sigma^{
    {h_i+H}}(g_i)) = 0,
    \]
    where $R \in \mathcal{K}_{{\bh+H}}[T_0,\ldots,T_{H-1}]$. Therefore,
    \begin{equation}\label{eq:express_L_H-2}
    \cL_\Sigma^{
    {h_i+1+H}}(g_i) = \frac{-R(\cL_\Sigma^{
    {h_i+1}}(g_i),\ldots,\cL_\Sigma^{
    {h_i+H}}(g_i))}{(\frac{\partial}{\partial T} Q) (\cL_\Sigma^{
    {h_i+1}}(g_i))},
    \end{equation}
    where the denominator is not zero: since $\operatorname{char} k = 0$, $\frac{\partial Q}{\partial T}$ is not identically zero, so if it vanished at the point, this would contradict minimality of the degree of $Q$ in $T$. 
    Both the numerator and the denominator on the right-hand side in~\eqref{eq:express_L_H-2} are polynomials in $\cL_\Sigma^{
    {h_i+1}}(g_i),\ldots,\cL_\Sigma^{
    {h_i+H}}(g_i)$ with coefficients in $\mathcal{K}_{
    {\bh+H}}$, and thus both belong to $\mathcal{K}_{
    {\bh+H}}$. Therefore, $\cL_\Sigma^{
    {h_i+1+H}}(g_i) \in \mathcal{K}_{
    {\bh+H}}$.

    We have shown that for every $H>0$ we have $\cL_\Sigma^{
    {\bh+1+H}}(\bg) \in \cE(\cL_\Sigma^{\leqslant
    {\bh+H}}(\bg), \bu^{(\infty)})$.
    In particular, $\mathcal{K}_{
    {\bh+1}} = \mathcal{K}_{
    {\bh+2}}$, and the claim follows.
\end{proof}

\begin{proof}[Proof of~\Cref{theorem:complete}]
    Subfield $\cE$ and tuple $\bh$ satisfy the assumptions of~\Cref{lemma:one-after-another}.
    We apply~\Cref{lemma:one-after-another} to $\cF = \CC(\cL_\Sigma^{(\infty)}(\bg), \bu^{(\infty)})$, where $\by = \bg(\bmu, \bx, \bu)$ are as in~\eqref{eq:ODEmodel}, which yields
    \[
    \cF = \cE(\cL_\Sigma^{\leqslant {\bh+1}}(\bg), \bu^{(\infty)}).
    \]
    Consider the differential field $\CC(\bM,\bX,\bU^{(\infty)})$ of the system $\Sigma$ defined using~\Cref{not:diff_field}.
    By construction, for every $i=1,\ldots,m$ and $h \geqslant 0$ repeated differentiation of $g_i(\bM,\bX, \bU)$ yields the equality in this field:
    \[
    Y_i^{(h)} = \cL_\Sigma^{h}(g_i)(\bM, \bX, \bU). 
    \]
    By~\Cref{lem:generic_sol},
    $h\in\CC(\bmu,\bx,\bu^{(\infty)})$ is observable if and only if
    \[
    h(\bM,\bX,\bU)
    \in
    \CC(\bY^{(\infty)},\bU^{(\infty)}).
    \]
    Therefore, $h$ is observable if and only if $h\in\cF$, and the claim follows.
\end{proof}

\subsection{
Where identifiability and observability meet}\label{sec:io-and-lie}

We combine the results of the previous sections to describe how to obtain generators of the observation field
of a given model. As before, we are considering a system $\Sigma$ as in~\eqref{eq:ODEmodel}, given by
\[
\bx' = \bF(\bmu, \bx, \bu),\quad \by = \bg(\bmu, \bx, \bu),
\]
with parameters $\bmu$, state variables $\bx$, input variables $\bu$, and output variables $\by$.

While the Lie derivatives of the outputs generate the observation field (e.g., by~\Cref{theorem:complete}), the required orders may be quite high (see~\Cref{sec:implementation}).
Roughly speaking, the following corollary shows that, if we start with a set of observable functions of parameters, the required orders become lower (controlled by a parametric profile, see \Cref{ex:paramprof}).

\begin{corollary}\label{cor:main}
Let $\cE \subseteq \CC(\bmu)$ be a
field whose elements are observable.
We write $\cE = \CC(c_1,\ldots,c_p)$ with $c_1,\ldots,c_p \in \CC(\bmu)$.
Let $\bh \in (\ZZpos)^m$ be a parametric profile for $\Sigma$ and $\cE$. 
Then, the observation field of $\Sigma$ is generated by $c_1,\ldots,c_p$ and by the coefficients of $\bg, \ldots,\cL_\Sigma^{\bh+1}(\bg)$ in the sense of~\Cref{thm:split_input}.
\end{corollary}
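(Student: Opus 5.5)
We combine \Cref{theorem:complete} with \Cref{thm:split_input}. Let $\cF^{\mathrm{gen}}$ be the field generated by $c_1,\ldots,c_p$ and the coefficients (in the sense of \Cref{thm:split_input}) of $\bg,\ldots,\cL_\Sigma^{\bh+1}(\bg)$, i.e. of the entries of $\cL_\Sigma^{\leqslant \bh+1}(\bg)$. The goal is to show $\cF^{\mathrm{gen}} = \cF_\Sigma$ by proving both inclusions.

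\emph{Inclusion $\cF^{\mathrm{gen}} \subseteq \cF_\Sigma$.} Each $c_j$ is observable by assumption and lies in $\CC(\bmu)\subseteq\CC(\bmu,\bx)$. Each $\cL_\Sigma^{k}(g_i)$ is observable: by \Cref{lem:generic_sol} it suffices that it equals $Y_i^{(k)}$ in the differential field of the system, which holds by construction. Applying \Cref{thm:split_input} to each $\cL_\Sigma^{k}(g_i)$, written as a reduced fraction in $\CC(\bmu,\bx)[\bu^{(\infty)}]$ normalized so that some coefficient equals one, shows that all its coefficients are observable. These coefficients lie in $\CC(\bmu,\bx)$, hence in $\cF_\Sigma$.

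\emph{Inclusion $\cF_\Sigma \subseteq \cF^{\mathrm{gen}}$.} First we check the hypotheses of \Cref{theorem:complete}. The field $\cE\subseteq\CC(\bmu)$ is closed under $\cL_\Sigma$, since Lie derivatives of functions of $\bmu$ alone vanish. It is a subfield of the observation field, and $\cE=\CC(\bc)$. The second condition in the definition of a parametric profile supplies exactly the differential equations $Q_i$ that \Cref{theorem:complete} requires. Hence every $f\in\cF_\Sigma$ satisfies
\[
f\in\CC\bigl(\bc,\cL_\Sigma^{\leqslant\bh+1}(\bg),\bu^{(\infty)}\bigr).
\]
Each $\cL_\Sigma^{k}(g_i)$ is a ratio of polynomials in $\bu^{(\infty)}$ with coefficients in $\cF^{\mathrm{gen}}$, so $f\in \cF^{\mathrm{gen}}(\bu^{(\infty)})$.

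It remains to remove the inputs, and this is the step needing care. Since $f\in\CC(\bmu,\bx)$ and $\bu^{(\infty)}$ is algebraically independent over $\CC(\bmu,\bx)$, we write $f=P/R$ with $P,R\in\cF^{\mathrm{gen}}[\bu^{(\infty)}]$ and $R\neq0$. From $P = fR$ we compare coefficients of a monomial in which $R$ has a nonzero coefficient $r\in\cF^{\mathrm{gen}}$. This gives $f=p/r$, where $p$ is the corresponding coefficient of $P$, so $f\in\cF^{\mathrm{gen}}$. The argument uses only that $\cF^{\mathrm{gen}}\subseteq\CC(\bmu,\bx)$, so that $\bu^{(\infty)}$ stays independent over it, together with unique representation of polynomials.

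I expect the only real subtlety to be the hypothesis check: \Cref{theorem:complete} needs the equations to have coefficients in $\cE$, and the parametric-profile definition provides exactly such equations. The remaining steps are bookkeeping.
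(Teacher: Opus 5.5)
Your proof is correct and follows the paper's route. You apply \Cref{theorem:complete} with $\cE$, which is closed under $\cL_\Sigma$ because $\cE\subseteq\CC(\bmu)$, rewrite the Lie derivatives through their coefficients from \Cref{thm:split_input}, and then discard $\bu^{(\infty)}$, and along the way you supply two details the paper only asserts: that the generators lie in $\cF_\Sigma$, and the coefficient-comparison argument showing that $f\in\CC(\bmu,\bx)\cap\cF^{\mathrm{gen}}(\bu^{(\infty)})$ forces $f\in\cF^{\mathrm{gen}}$.
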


\begin{proof}
    Let $q_1,\ldots,q_\eta \in \CC(\bmu,\bx)$ denote the coefficients of $\bg, \ldots,\cL_\Sigma^{\bh+1}(\bg)$ in the sense of~\Cref{thm:split_input}.
    It suffices to show that
    \[
    h \in \CC(\bmu, \bx)~\text{is observable} ~\Longleftrightarrow~h\in \CC(c_1,\ldots,c_p,q_1,\ldots,q_\eta).
    \]

    Assume $h \in \CC(\bmu,\bx)$ is observable. Let $\bc =(c_1,\ldots,c_p)$ and $\bq = (q_1,\ldots,q_\eta)$. It suffices to show that $h \in \CC(\bc,\bq)$. 
    Voil\`a:
    \[
    \begin{aligned}
    && h & \in &\CC(\bc, \cL_\Sigma^{\leqslant \bh+1}(\bg), \bu^{(\infty)})&&\text{(by~\Cref{theorem:complete})}\\
    \Leftrightarrow && h & \in &\CC(\bc, \bq, \bu^{(\infty)})&&\text{(since $\cL_\Sigma^{\leqslant \bh+1}(\bg) \subseteq \CC(\bq,\bu^{(\infty)})$)}\\
    \Leftrightarrow && h & \in &\CC(\bc, \bq)&&\text{}\qedhere\\
    \end{aligned}
    \]
\end{proof}

\begin{remark}[Prolongations only approach]\label{remark:prolongations-only}
 The observation field can be generated by the coefficients of $\bg, \ldots,\cL_\Sigma^{|\bx|+|\bmu|}(\bg)$ in the sense of~\Cref{thm:split_input} alone, which follows from \Cref{cor:main} with $\cE := \CC$.
\end{remark}

\begin{remark}[Encoding parameters]\label{remark:parameters}
    Given a model $\Sigma$ with parameters, one can convert it to an equivalent model without parameters by making every parameter a state variable satisfying the equation $x'(t) = 0$. 
    Then, \Cref{cor:main} reduces to the approach from~\Cref{remark:prolongations-only}.
\end{remark}

\begin{example}
    Consider a model of the DC motor~\cite[Chapter VI, Exercise 6.28]{distefano2012feedback}:
    \[
    \left\{\begin{aligned}
    L i'(t) &= -R i(t) - K_e \omega(t) + u(t),\\
    J \omega'(t) &= K_t i(t) - B \omega(t),
\end{aligned}\right.\quad\quad y(t) = \omega(t).
    \]
    Here, $\bx = (i(t), \omega(t))$ and $\bmu = (R, L, K_e, K_t, J, B)$. The output is $y(t)$ and $u(t)$ is the input.
    
    If $\Omega(s), U(s)$ are the Laplace transforms of $y(t)$ and $u(t)$, respectively, then the transfer function~\cite[Chapter VI]{distefano2012feedback} of the system is
    \[
    \frac{\Omega(s)}{U(s)} =
    \frac{1}
    {c_1 s^2
    +c_2 s
    +c_3},
    \]
    where \[(c_1,c_2,c_3) = \left(\frac{J L}{K_t}, \frac{J R + B L}{K_t}, \frac{B R + K_e K_t}{K_t}\right).\]
    Then $c_1,c_2,c_3$ are identifiable since for linear models the coefficients of the transfer function coincide with the coefficients of the input-output equation and also due to~\cite[Theorem 1]{linear-io}.
    Equivalently, we could have obtained these coefficients from the input-output equation itself by eliminating the states.

    We take $\cE := \CC(c_1,c_2,c_3)$. 
    To apply~\Cref{cor:main} we also need to know a parametric profile $h$ for $\cE$ (as in~\Cref{def:param-prof}) or an upper bound for it. 
    Note that 
    $h \leqslant 1$ since $y(t)$ and $u(t)$ satisfy a differential equation over $\cE$ of order two. 
    It follows from~\Cref{cor:main} that the 
    observation
    field is generated by $c_1, c_2, c_3$ together with the coefficients of 
    $\omega, \cL_\Sigma (\omega), \cL_\Sigma^2 (\omega)$
    in terms of $u$.
    On the other hand, with the straightforward method (that applies \Cref{cor:main} with $\cE = \mathbb{C}$, as in \Cref{remark:prolongations-only}) one needs to 
    take derivatives up to order $|\bx| + |\bmu| = 8$.
\end{example}

\subsection{Main algorithm}

The corollary above yields the following approach to computing the observation field:

\begin{algorithm}[H]
\caption{Computing the observation field}
\label{alg:find_and_simplify}
\begin{description}[itemsep=0pt]
\item[Input:] a system $\Sigma$, given by $\bx' = \mathbf{f}(\bmu, \bx, \bu),~ \by = \bg(\bmu, \bx, \bu)$, as in~\eqref{eq:ODEmodel}.
\item[Output:] 
elements of $\mathbb{C}(\bmu, \bx)$
generating the observation field of $\Sigma$.
\end{description}

\begin{enumerate}[label = \textbf{(Step~\arabic*)}, leftmargin=*, align=left, labelsep=2pt, itemsep=4pt]
    \item \label{step:1} Compute some observable functions of parameters ($c_1,\ldots,c_p \in \CC(\bmu)$ in~\Cref{cor:main}):
    \begin{enumerate}[ref= \theenumi (\alph*)]
    \item\label{step:eliminate} Compute a parametric profile $\bh = (h_1,\ldots,h_m)$ for $\Sigma$ and $\CC(\bmu)$ and the corresponding differential equations $Q_1,\ldots,Q_m$ 
    as in~\Cref{def:param-prof}, for instance, using differential elimination~\cite[Algorithm 4.5]{Dong2023}.

    \item\label{step:identi_functions} Normalize each of $Q_1,\ldots,Q_m$ so that each has at least one coefficient with respect to $\by, \bu$ equal to one. Let $c_1,\ldots,c_p \in \CC(\bmu)$ be the coefficients of $Q_1,\ldots,Q_m$ with respect to $\by, \bu$.
    
    \item \label{step:test} Check if $c_1,\ldots,c_p$ are observable, for instance, using \cite[Algorithm~5.2]{Dong2023} (see also~\cite[Lemma 1]{Ovchinnikov21}). 
    If they are not, then convert parameters into state variables as in~\Cref{remark:parameters}, and 
    return the output of~\Cref{alg:find_and_simplify} applied to the modified model.

    \end{enumerate}
    \item \label{step:lie} Compute $q_1,\ldots,q_\eta \in \CC(\bmu,\bx)$, the coefficients of the Lie derivatives $\bg, \ldots,\cL_\Sigma^{\bh+1}(\bg)$ in the sense of~\Cref{thm:split_input}, by repeatedly differentiating the outputs $\bg$ symbolically.
    \item\label{step:simplify} Return a simple generating set of the field generated by $c_1,\ldots,c_p,q_1,\ldots,q_\eta$ by applying the simplification algorithm 
    \cite[Algorithm~8]{demin2026simplegeneratorsrationalfunction} (alternatively, use~\cite[Steps~1--3 of the Algorithm in Section~5]{Meshkat11}).
\end{enumerate}
\end{algorithm}

\begin{remark}[Early termination]\label{remark:early}
    In~\ref{step:lie}, one may alternate between taking Lie derivatives, extracting their coefficients using~\Cref{thm:split_input}, and taking Lie derivatives of these coefficients. 
    Roughly speaking, this amounts to augmenting the model on the fly with outputs corresponding to these coefficients. 
    As a result, differentiation orders lower than $\bh+1$ might suffice. 
    By~\Cref{theorem:complete}, one may terminate once higher order derivatives are algebraic over
    the field generated so far; this can be tested, for example, using \cite[Theorem~2.2]{ehrenborg1993apolarity}.
\end{remark}

\begin{remark}[Observability of individual parameters and states]\label{rem:odin_v_pole}
A particular state $x_i$ or parameter $\mu_j$ is observable if and only if it is among the generators returned by~\Cref{alg:find_and_simplify}. This is due to the following property of the simplification procedure from~\cite{demin2026simplegeneratorsrationalfunction}: if the field generated by $g_1(\bz), \ldots, g_\ell(\bz) \in \QQ(\bz)$ with $\bz = (z_1, \ldots, z_m)$ contains a variable, say $z_1$, then the simplified set of generators will contain $z_1$.
\end{remark}

\begin{proposition}
\label{prop:owo}
    \Cref{alg:find_and_simplify} is correct.
\end{proposition}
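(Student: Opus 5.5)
We prove correctness by following the two branches of \ref{step:test}: the one where the computed $c_1,\ldots,c_p$ are observable, and the one where they are not.

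\textbf{Branch 1: the test in \ref{step:test} succeeds.} Then $c_1,\ldots,c_p$ are observable, so $\cE := \CC(c_1,\ldots,c_p) \subseteq \CC(\bmu)$ lies in the observation field.

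We first check that $\bh$ from \ref{step:eliminate}, which was computed as a parametric profile for $\CC(\bmu)$, is also a parametric profile for $\cE$.
\begin{itemize}
\item The first condition of \Cref{def:param-prof} (no nontrivial equation of order at most $h_i$ in each $y_i$) transfers immediately, since $\cE \subseteq \CC(\bmu)$: any equation over $\cE$ is an equation over $\CC(\bmu)$.
\item For the second condition, note that after the normalization in \ref{step:identi_functions} every $Q_i$ has all its coefficients (with respect to $\by,\bu$) among the $c_j$. Hence each $Q_i$ is a differential equation over $\cE$ with the required orders.
\end{itemize}
Then \Cref{cor:main} applies directly. It says that the observation field is generated by $c_1,\ldots,c_p$ together with the coefficients $q_1,\ldots,q_\eta$ of $\bg,\ldots,\cL_\Sigma^{\bh+1}(\bg)$ in the sense of \Cref{thm:split_input}, and these are exactly what \ref{step:lie} computes. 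Finally, \ref{step:simplify} returns another generating set of the same field $\CC(\bc,\bq)$, since the simplification algorithm preserves the generated field. So the output is correct in this branch.

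\textbf{Branch 2: the test fails.} The algorithm recurses on the model $\widetilde\Sigma$ from \Cref{remark:parameters}, where each parameter becomes a state with zero derivative.

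\emph{Same observation field.} The analytic solutions of $\Sigma$ and $\widetilde\Sigma$ correspond bijectively (constant states versus parameters), and the outputs are unchanged. So \Cref{def:identifiability2} gives the same observable functions in $\CC(\bmu,\bx)$, i.e.\ $\cF_\Sigma=\cF_{\widetilde\Sigma}$.

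\emph{Termination.} $\widetilde\Sigma$ has no parameters, so \ref{step:eliminate} works over $\CC$ and the coefficients $c_j$ produced in \ref{step:identi_functions} are complex numbers. Constants are trivially observable, so the test in \ref{step:test} passes, and Branch 1 applies to $\widetilde\Sigma$ with $\cE=\CC$. A parametric profile for $\CC$ exists by the transcendence-degree argument used in the proof of \Cref{lemma:one-after-another}, so \ref{step:eliminate} is well defined. Thus the recursion has depth at most one.

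\textbf{Main obstacle.} The delicate point is the well-definedness of \ref{step:eliminate} and \ref{step:identi_functions}.
\begin{itemize}
\item We need the normalized $Q_i$ to have coefficients in $\CC(\bmu)$. This holds because $Q_i$ is a differential polynomial in $\by,\bu$ over $\CC(\bmu)$, so dividing by one nonzero coefficient keeps all coefficients in $\CC(\bmu)$.
\item We need the profile property to survive restriction of the ground field to $\cE$. This is what the two-bullet check in Branch 1 establishes: the first condition only becomes weaker over a smaller field, and the second holds because the $Q_i$ themselves lie over $\cE$.
\end{itemize}
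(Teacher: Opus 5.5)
Your proof is correct and follows the same route as the paper's. You split on the outcome of \ref{step:test}, apply \Cref{cor:main} with $\cE=\CC(c_1,\ldots,c_p)$ in the first case, and apply it to the parameter-free model of \Cref{remark:parameters} (where $\cE=\CC$) in the second. You also verify points the paper only asserts: that $\bh$ stays a parametric profile over the smaller field $\cE$, that the simplification step preserves the generated field, and that the original and modified models have the same observation field.
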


\begin{proof}
    First consider the case when $c_1,\ldots,c_p$ are observable in~\ref{step:test}. Note $\bh$ is a parametric profile for $\Sigma$ and $\cE := \CC(c_1,\ldots,c_p)$. Then the observation
    field is generated by $c_1,\ldots,c_p$ and $q_1,\ldots,q_\eta$ due to~\Cref{cor:main}.

    Now consider the case when $c_1,\ldots,c_p$ are not observable in~\ref{step:test}, so~\Cref{alg:find_and_simplify} is applied again to the modified model. Consider this second application. Since the modified model has no parameters, $c_1,\ldots,c_p$ are elements of $\CC$, and are thus observable. Then the 
    observation
    field of the modified model is generated by $c_1,\ldots,c_p$ and $q_1,\ldots,q_\eta$ due to~\Cref{cor:main}. The 
    observation
    fields of the original and modified models coincide, and the claim follows.
\end{proof}

\begin{example}
    Consider a variant of the Lotka-Volterra model,
    \[
    \left\{\begin{aligned}
    x_1'(t) &= x_1(t) \cdot (\alpha + \beta x_2(t) - \kappa u(t)),\\
    x_2'(t) &= x_2(t) \cdot (-\delta + \gamma x_1(t)),
\end{aligned}\right.\quad\quad y(t) = x_1(t),
    \]
    where $u(t)$ is the input and $y(t)$ is the output. 
    We have $\bx = (x_1(t),x_2(t))$
    and $\bmu = (\alpha,\beta,\delta,\gamma,\kappa)$.

    In the first step, \Cref{alg:find_and_simplify}
    finds some observable functions of parameters. 
    By eliminating $x_1$ and $x_2$, it
    calculates that $y(t)$ and $u(t)$ satisfy a differential equation of order two:
    \[
    y y''
    -\left(y'\right)^2
    +c_1 y y'
    -c_2 y^2 y'
    +c_3 y^2 u'
    +c_4 y^3
    -c_5 y^2
    -c_6 y^3 u
    +c_7 y^2 u = 0
    \]
    where $(c_1,\ldots,c_7) = (\delta, \gamma, \kappa, \alpha \gamma, \alpha \delta, \gamma \kappa, \delta \kappa)$. 
    It follows that the parametric profile for $\CC(\bmu)$ is $h = 1$.
    It can be checked by~\cite[Algorithm~5.2]{Dong2023} that the coefficients $c_1,\ldots,c_7$ of this input-output equation are identifiable, which concludes \ref{step:1} of the algorithm.

    At \ref{step:lie}, the coefficients of $x_1, \mathcal{L}_\Sigma(x_1), \mathcal{L}_\Sigma^2(x_1)$
    in terms of $u, u'$ are computed. 
    This yields that $q_1,\ldots,q_7$ correspond to the underlined coefficients in:
   \begin{align*}
       x_1 &= \underline{x_1} \quad \quad \cL_\Sigma(x_1) = -\underline{(\kappa x_1)} u + \underline{(\alpha x_1 + \beta x_1 x_2)},\\[0.3em]
       \cL_\Sigma^2 (x_1) &=\underline{(\kappa^2 x_1)} u^2 - \underline{(2\alpha\kappa x_1+2\beta\kappa x_1 x_2)} u - \underline{(\kappa x_1)} u' + \underline{(\alpha^2x_1 +\left(2\alpha\beta-\beta\delta\right)x_1x_2 +\beta^2x_1x_2^2 +\beta\gamma x_1^2x_2)}
   \end{align*}
    Using~\Cref{cor:main} with $\cE := \CC(c_1,\ldots,c_7)$, we may conclude that the observation field of the model is generated by $c_1,\ldots,c_7$ and $q_1,\ldots,q_7$, that is, $\cF_\Sigma = \CC(c_1,\ldots,c_7, q_1,\ldots, q_7)$.

    The final step is to apply algebraic simplification to $\cF_\Sigma$. Using \cite[Algorithm~8]{demin2026simplegeneratorsrationalfunction}, we obtain that $\cF_\Sigma$ can be generated simply by 
    \[
    \alpha, ~\gamma, ~\delta,~ \kappa, ~x_1, ~\beta x_2,
    \]
    which is the output of~\Cref{alg:find_and_simplify}.
\end{example}


\section{Implementation}\label{sec:implementation}

\Cref{alg:find_and_simplify} has been implemented in the 
\href{https://github.com/SciML/StructuralIdentifiability.jl}
{StructuralIdentifiability.jl}
package in Julia.
Since we use the algorithms from~\cite{demin2026simplegeneratorsrationalfunction}, our implementation is Monte-Carlo probabilistic in the sense that it may return an incorrect result but the
user can specify any positive upper bound on the probability of this event.
In this section, we will apply our implementation to several models (presented in detail in the next section) arising in applications with varying complexity and type of control and present some statistics illustrating the design choices made in~\Cref{alg:find_and_simplify}.
The computations were done using the version 0.5.21 of the software on MacBook Air with Intel i5 core (1,6 GHz) and 16 Gb RAM\footnote{The benchmark script is available at {\url{https://github.com/SciML/StructuralIdentifiability.jl/blob/v0.5.31/benchmarking/ObservableFunctions/benchmark_script.jl}}}.

First,
we will examine the impact of the identifiable function computation at~\ref{step:1} and of the final result simplification (which are optional as far as only the correctness of the output is concerned) on the complexity of the obtained generators.
To this end, we collect in \Cref{table:messiness} the highest degrees and term counts among the computed generators for various versions of the approach:
\begin{itemize}
    \item \emph{Prolongations only} (the combination of~\Cref{remark:prolongations-only,remark:early}) stands for using the approach described in the first two subsections of~\Cref{sec:informal}: the generators are obtained only by taking Lie derivatives and applying~\Cref{thm:split_input} (similarly to~\cite{Nemcova2016}).
    The \emph{orders} columns display the order of differentiation needed for every output.
    
    \item \emph{\ref{step:1} and prolongations} refers to performing the steps of~\Cref{alg:find_and_simplify} (and \emph{orders} column contains the parametric profile) but without the final simplification.
    \item \emph{\Cref{alg:find_and_simplify}} corresponds to the full algorithm. 
    We do not list the orders of the Lie derivatives since they coincide with the ones obtained in the previous item.
\end{itemize}

We observe that starting with identifiable functions of parameters allows to considerably reduce the 
orders
of Lie derivatives 
and, consequently, the complexity of the expressions.
Furthermore, the final simplification also allows to reduce the complexity even further and quite dramatically.
When considering examples one by one in~\Cref{sec:examples},
we will show that the resulting generators are not only compact but also often admit a domain-specific interpretation.

\begin{table}[htbp!]
    \centering
\begin{tabular}{l|c|c|c||c|c|c||c|c|}
\toprule
\multirow{2}{*}{Model} & \multicolumn{3}{c}{Prolongations only} & \multicolumn{3}{c}{\ref{step:1} and prolongations} & \multicolumn{2}{c}{\Cref{alg:find_and_simplify}}\\
\cmidrule(lr){2-4} \cmidrule(lr){5-7} \cmidrule(lr){8-9}
 & orders & degree & \# terms & orders & degree & \# terms & degree & \# terms \\
 
\midrule
\Cref{ex:siwr} & 7 & 14 & 313 & 3 &  6 &  15 & 3 & 3\\
\hline
\Cref{ex:cancer} ($y_2 = Q$) &  (5, 1) &  25 &  958 & (3, 1) &  13 & 94 & 2 & 2\\
\hline
\Cref{ex:cancer} ($y_2 = v$) & (5, 1) &  25 &  958 & (3, 1) &  13 & 94 & 3 & 2\\
\hline
\Cref{ex:enzyme} & 5 & 55 & 67 & 2 &  19  & 10 & 2 & 3 \\
\hline
\Cref{ex:SLIQR} & 6 & 21 & 162 & 4 & 14 & 41 & 3 & 4 \\
\hline
\Cref{ex:eaihrd} & 12 & 22 & 493813 & 6 & 10 & 748 & 5 & 7 \\
\bottomrule
\end{tabular}
    \caption{Degrees and sizes of generators of the observation field}
    \label{table:messiness}
\end{table}

While the final simplification could have been applied to the generators computed using \emph{Prolongations only} approach, \Cref{table:runtimes} shows that computing identifiable function of parameters first in~\ref{step:1} typically speeds up the computations (sometimes turning intractable problems into tractable ones!).

\begin{table}[htbp!]
    \centering
    \begin{tabular}{l|c|c|c|}
    \toprule
        \multirow{2}{*}{Model} & \multicolumn{2}{c}{Runtimes} & \multirow{2}{*}{Speedup} \\
\cmidrule(lr){2-3} 
 & \Cref{alg:find_and_simplify} without~\ref{step:1} & Full \Cref{alg:find_and_simplify} &  \\
 \midrule
        \Cref{ex:siwr} & 0.47 s. & 0.2 s. & 2.35 \\
         \hline
         \Cref{ex:cancer} ($y_2 = Q$) & 13.7 s. & 2.2 s. & 6.2 \\
         \hline
         \Cref{ex:cancer} ($y_2 = v$) & 161.2 s. & 234.6 s. & 0.69 \\
         \hline
         \Cref{ex:enzyme} & 29.6 s. & 0.56 s. & 52.9 \\
         \hline
         \Cref{ex:SLIQR} & 3.7 s. & 2.5 s. & 1.48 \\
         \hline
         \Cref{ex:eaihrd} & > 48 h. & 355.7 s. & $\infty$\\
         \bottomrule
    \end{tabular}
    \caption{Speedup achieved by~\ref{step:1}}
    \label{table:runtimes}
\end{table}

\section{Examples}
\label{sec:examples}

In this section, we apply our algorithm to several models from the literature and discuss interpretations of the obtained generators of the observation field.

\begin{example}[Cholera model]\label{ex:siwr}

We describe how our algorithm was applied in~\cite[Section 3.4]{Pant2026} to study identifiability of the following model describing the spread of cholera proposed in \cite{eisenberg2013identifiability}:

\begin{equation}\label{eq:SIRW}
\begin{cases}
S'(t) = - \beta_W S(t)W(t) - \beta_I S(t)I(t), \\
I'(t) = \beta_W S(t)W(t) + \beta_I S(t)I(t) - \gamma I(t), \\
W'(t) = \alpha I(t) - \zeta W(t), \\
y(t) = W(t).
\end{cases}
\end{equation}
In this model $S(t), I(t), W(t)$ are the state variables corresponding to susceptible and infected population and to the pathogen concentration in water, respectively.
Parameters are $\alpha, \beta_W, \beta_I, \gamma, \zeta$, and the observed quantity is the pathogen concentration $W$.
Our~\Cref{alg:find_and_simplify} finds the following generators for the observation field:
\begin{equation}\label{eq:siwr_result}
\frac{\alpha}{\beta_I}, \; \gamma + \zeta, \; \gamma \zeta, \; \frac{\alpha \beta_W + \beta_I \zeta}{\beta_I}, \; W(t), \; \beta_I S(t), \; \beta_I I(t) + \beta_W W(t).
\end{equation}
In this list, the first four quantities depending on parameters only are computed at~\ref{step:1} and the time-dependent identifiable functions are obtained by performing the rest of the computation.
By~\Cref{rem:odin_v_pole}, we see that none of $S(t), I(t), \alpha, \beta_I, \beta_W, \gamma, \zeta$ is observable.

We remark that the quantities~\eqref{eq:siwr_result} produced by the algorithm allow for domain-specific interpretation. 
For example, $\beta_I I(t) + \beta_W W(t)$ is the combined per-capita infection risk from human and contaminated water exposure (also called the force of infection) and $\beta_I S(t)$ reflects the fact that measuring only the pathogen concentration 
one cannot distinguish between large population with low transmission rate and small population with high transmission rate.
Furthermore, one can then use~\eqref{eq:siwr_result} to see that the effective reproduction number is identifiable.
More precisely, for this model this time-dependent quantity is equal to~$\frac{\alpha \beta_W + \beta_I \zeta}{\gamma\zeta} S(t)$ and it naturally decomposes as a product (see~\cite[Section~3.4]{Pant2026}):
\[
 \frac{\alpha \beta_W + \beta_I \zeta}{\gamma\zeta} S(t) = \frac{\alpha \beta_W + \beta_I \zeta}{\beta_I} \cdot \frac{1}{\gamma \zeta} \cdot \beta_I S(t).
\]
Other applications of~\eqref{eq:siwr_result} in~\cite{Pant2026} include determining possible complementary data sufficient to restore identifiability.
\end{example}


\begin{example}[Cancer modeling]\label{ex:cancer}
    We consider the following model of prostate cancer reproduced from~\cite[Eq. (12)-(15)]{Phan2023}, which incorporates the impact of  androgen on the tumor growth (for the history and variations of the model, see the references in~\cite{Phan2023}).
    \[
    \begin{cases}
        x'(t) = \mu_m \left(1 - \frac{q}{Q(t)}\right) x(t) - \left(v(t) \frac{R}{R + Q(t)} + \delta x(t)\right) x(t),\\
       v'(t) = -d v(t),\\
       Q'(t) = (\gamma_1 u(t) + \gamma_2) (Q_m - Q(t)) - \mu_m (Q(t) - q),\\
       P'(t) = b Q(t) + \sigma x(t) Q(t) - \varepsilon P(t).
    \end{cases}
    \]
    In this model, $x(t)$ is the cancer population, $Q(t)$ is the intracellular androgen level, $v(t)$ is the androgen-dependent death rate for the cancer cells, and $P(t)$ is the serum PSA (prostate-specific antigen) level.
    The model is controlled through the drug injection $u(t)$.

    The main measurement protocol considered in~\cite{Phan2023} involved two outputs $y_1(t) = P(t) , y_2(t) = Q(t)$.
    In this case, the observation field
    is generated by $\sigma x(t), \frac{\delta}{\sigma}$ and the remaining states and parameters.
    If one takes instead, for example, $y_2(t) = v(t)$, 
    a generating set of observation field 
    will be 
    \[
     bq, \; bR, \; b Q_m, \; b Q(t), \; \delta x(t), \; \frac{b\delta}{\sigma}
    \]
    and the remaining states and parameters ($\gamma_1, \gamma_2, \varepsilon, d, \mu_m, v(t), P(t)$).
    In both cases, the 
    observable
    functions are products of integer powers of parameters and states meaning that all the symmetries of the model are scaling transformations.
\end{example}


\begin{example}[Controlling enzyme kinetics]\label{ex:enzyme}
We consider a reaction with two competing substrates, $S_1(t)$ and $S_2(t)$, following the Michaelis-Menten kinetics (see, e.g~\cite[Section~3]{Schnell2000}) non-affinely controlled by $I(t)$, a common uncompetitve inhibitor~\cite[Eq.~(4)]{Hsu1979}.
By taking the total amount of the substrates as the output, we obtain:
\[
\begin{cases}
    S_1'(t) = \frac{V_1 S_1(t)}{1 + K_1 S_1(t) \left(1 + \frac{I(t)}{L_1}\right) + K_2 S_2(t) \left(1 + \frac{I(t)}{L_2}\right)},\\
    S_2'(t) = \frac{V_2 S_2(t)}{1 + K_1 S_1(t) \left(1 + \frac{I(t)}{L_1}\right) + K_2 S_2(t) \left(1 + \frac{I(t)}{L_2}\right)},\\
    y(t) = S_1(t) + S_2(t).
\end{cases}
\]
Applying~\Cref{alg:find_and_simplify}, we obtain the following generators of the observation field:
\begin{align*}
  &V_1 + V_2, & &L_1 + L_2, & &K_1 + K_2, & &S_1(t) + S_2(t), & &L_1 V_2 + L_2 V_1,\\
  &V_1 V_2, & &L_1 L_2, & &K_1 K_2, & &S_1(t) S_2(t), & &V_1 S_1(t) + V_2 S_2(t).
\end{align*}
The presence of sums and product in the list can be naturally explained by the symmetry of the model exchanging the substrates.
Interestingly, in addition to the observed sum $S_1(t) + S_2(t)$, we also see that the product $S_1(t) S_2(t)$ and the sum weighted by the limiting rates $V_1 S_1(t) + V_2 S_2(t)$ is among the generators.

\end{example}


\begin{example}[Modeling a disease with a relapse after remission]\label{ex:SLIQR}
    The following compartmental epidemiological model was considered in~\cite[Eq~(4)]{Dankwa2022} in order to take into account the possibility of a relapse after a period of remission:
    \[
    \begin{cases}
        S'(t) = -\beta \frac{S(t)I(t)}{N} - u(t) \frac{S(t)}{N},\\
        L'(t) = \beta \frac{S(t)I(t)}{N} - \alpha L(t),\\
        I'(t) = \alpha L(t) - \gamma I(t) + \sigma Q(t), \\
        Q'(t) = (1 - \nu) \gamma I(t) - \sigma Q(t),\\
        y(t) = \frac{I(t)}{N}.
    \end{cases}
    \]
    The states $S(t), L(t), I(t), Q(t)$ correspond to the susceptible, exposed, infectious individuals and the individuals at the remission stage, respectively.
    The model is controlled by the vaccination rate $u(t)$ which is assumed to be sufficiently generic.
    Computation with~\Cref{alg:find_and_simplify} returns the following 
    generating 
    observable
    functions:
    \[
      \sigma,\; \beta, \; N, \; \alpha + \gamma, \; \alpha \nu \gamma,\; \alpha \gamma + (\nu - 1) \gamma \sigma,\; I(t), \; \alpha S(t),\; (\alpha - \sigma)Q(t), \; \alpha L(t) - \gamma I(t) + \sigma Q(t).
    \]
    The observable functions involving state variables include the total number of infectious individuals ($I(t)$) as well as the rate of its change ($\alpha L(t) - \gamma I(t) + \sigma Q(t)$; can be thought as ``new daily cases'').
    The element $\alpha S(t)$ reflects the confounding of larger population with longer average latent period (which is equal to $\frac{1}{\alpha}$) and smaller population with shorter latent period.
    Similarly, $(\alpha - \sigma) Q(t)$ reflects a tradeoff between the number of individuals at the remission stage and the difference between the remission and latent periods.
\end{example}

\begin{example}[EAIHRD epidemiology model]\label{ex:eaihrd}
    Consider an epidemiological model from~\cite{Fokas2020} (see also~\cite{chen2024practicalidentifiabilityparameterestimation}) which takes
    into account multiple aspects such as hospitalization, mortality, and latent period.
    The model is given by the following ODE system (\cite[Eq. (1)-(6)]{Fokas2020} and~\cite[Eq. (6a)-(6f)]{chen2024practicalidentifiabilityparameterestimation}):
    \begin{equation}
        \begin{cases}
            A'(t) = a E(t) - r_1 A(t),\\
            I'(t) = s E(t) - (h + r_2) I(t),\\
            H'(t) = h I(t) - (r_3 + d) H(t),\\
            R'(t) = r_1 A(t) + r_2 I(t) + r_3 H(t),\\
            D'(t) = d H(t),\\
            E'(t) = (N - A(t) - I(t) - H(t) - R(t) - D(t) - E(t)) (c_1 A(t) + c_2 I(t)) - (a + s) E(t),\\
            y(t) = D(t).
        \end{cases}
    \end{equation}
    The states $A(t), I(t), H(t), R(t), D(t), E(t)$ correspond to asymptomatic, infected, hospitalized, recovered, dead, and exposed individuals.
    The observation function is taken to be the cumulative mortality.
    \Cref{alg:find_and_simplify} produces the following list of generators of the observation field:
    \begin{align*}
        &r_1,\quad d + r_3,\quad a + h + s + r_2,\quad ah + ar_2 + hs + sr_2,\quad \frac{dhs}{ac_1 + sc_2},\\
        &D(t), \quad d H(t),\quad dh I(t),\quad c_1 A(t) + c_2 I(t),\quad a + s + \frac{s E(t)}{I(t)},\\
        & c_2(h - r_1 + r_2) I(t) - (a c_1 + s c_2)E(t),\quad \frac{s (A(t) + H(t) +  R(t) +  D(t) - N)}{I(t)} - a.
    \end{align*}
    Several of the elements of the list admit natural interpretation. 
    For example, since only the cumulative mortality is observed, hospitalized and infected individuals appear in $d H(t)$ and $d h I(t)$, respectively, scaled by the mortality and hospitalization rates.
    The expression $c_1 A(t) + c_2 I(t)$ is the force of infection (i.e. combined per-capita infection risk, see~\Cref{ex:siwr}).
    Furthermore, the functions involving parameters only obtained by our algorithm are in an agreement with new parameters $F, R_2, R_3, C_1, C_2$ introduced by hand in~\cite[Eq. (9)]{Fokas2020} (see also~\cite[Section~2]{chen2024practicalidentifiabilityparameterestimation}).
    For example, $d + r_3 = R_3$, $a + h + r_2 + s = F + R_2$, $ah + ar_2 + hs + sr_2 = F R_2$, and $\frac{dhs}{ac_1 + sc_2}$ is the harmonic mean of $C_1$ and $C_2$. 
\end{example}

\bibliographystyle{abbrvnat}
\bibliography{refs}

\end{document}